\numberwithin{equation}{section}
\newcommand{\p}{\mathbf{p}}
\newcommand{\q}{\mathbf{q}}
\newcommand{\bal}{\boldsymbol \alpha}
\theoremstyle{plain}
\newtheorem{theorem}{Theorem}[section]
\newtheorem{lemma}[theorem]{Lemma}
\newtheorem{proposition}[theorem]{Proposition}
\theoremstyle{definition}
\newtheorem{remark}[theorem]{Remark}
\begin{document}

\title{Approximation of curves with piecewise constant or piecewise linear functions}

\author{Fr\'ed\'eric de Gournay \and Jonas Kahn \and L\'eo Lebrat}


\keywords{B-Spline $\cdot$ Approximation of curves $\cdot$ Eulerian numbers $\cdot$ Hausdorff distance}

\begin{abstract}
 In this paper we compute the Hausdorff distance between sets of continuous curves and sets of piecewise constant or linear discretizations. These sets are Sobolev balls given by the continuous or discrete $L^p$-norm of the derivatives. We detail the suitable discretization or smoothing  procedure which are preservative in the sense of these norms. Finally we exhibit the link between Eulerian numbers and the uniformly space knots B-spline used for smoothing.
\end{abstract}

\maketitle

\section*{Introduction}
 This article focuses on a widespread problem of approximation which consists in approaching a curve by a set of points or by a piecewise linear function (line segments or polyline). We also analyze the reverse operation called smoothing, which amounts to, given a set of points or polyline find an approaching curve with an higher level of regularity. These two approaches yields the instinctive question~:~how well can we approximate a particular space of curves with a particular set of points sets or polyline sets.

 This subject has been thoroughly studied, especially by the computer vision community \cite{dunham1986optimum,tomek1974two,mumford1989optimal}. The most common approach is to find minimal length objects controlling some approximation error and the limit error. Our view is different, since we want to approximate in a Hausdorff sense, that is to approximate each curve by a set of points or polylines, and each set of points or polylines by a curve, so that all approximations are close for an appropriate distance.

  In practice, the authors have encountered this question when trying to computationally project a measure on a space of pushforward measures of curves \cite{boyer2016generation,chauffert2017projection,Fred2018troisquart,Fred2018Projections}: the implementation needs a discretization, and it is guaranteed to work only if both directions of approximation are small for the transportation distance $\mathcal{W}^1$. It is likely that this kind of results may be useful in other contexts in computer science.

  In this article, we prove that Sobolev balls and similar spaces may be approximated by discretized Sobolev spaces, where the norm is given by discrete derivatives. As explained in the notations, the Hausdorff distance comes from the transportation distance on both time and space, giving a very robust meaning to the approximation.

  An ingredient in the proof is exhibiting a Sobolev curve that approximates a given set of points. The construction makes use of Eulerian numbers. Given their known connection to B-splines \cite{he2012eulerian,wang2008spline}, this might not be so surprising.
  
\section{Notation}

Throughout the paper, $\kappa_a$ will denote a constant depending only on $a$ that might change from line to line.

Curves are in $\mathbb{R}^d$ and we identify discretization of curves with families of vectors $\p=(\p_0,\dots,\p_{n-1})$. Even if we tackle both periodic and non-periodic cases, the notations are tailored for the periodic case, which allows the abuse of notation $\p_{i} = \p_{ i \ (\text{mod }n)}$ for each $i \in \mathbb{Z}$.
In this setting, the discrete convolution product for a family of vectors $\p \in \mathbb{R}^{n\times d}$ and $K \in \mathbb{R}^n$  reads as :
\[
 (K\star\p)_i = \sum_{j = 0}^{n-1} K_j\p_{i-j},
\]
Given any norm $\Vert \bullet \Vert$ in $\mathbb{R}^d$, the discrete renormalized $\ell^q$ norm is defined as
\[\Vert \p\Vert_{\ell^q}=\left(\frac{1}{n} \sum_{i=0}^{n-1} \left\| \p_i \right\|^q \right)^{1/q}.
\] 
Note that this special choice of renormalization of the $\ell^q$ norm turns  Young's convolution inequality into :
\begin{equation}\label{eq:YoungIneq}
\| K\star\p\|_{\ell^q} \leq n \| K\|_{\ell^1} \| \p\|_{\ell^q} \quad \forall q \ge 1. 
\end{equation}
The convolutional discrete derivative operator, $\Delta$ is defined by
\[(\Delta \star \p)_i=\p_{i}-\p_{i-1}.\]
Similarly for any $m\in \mathbb{N}$, the $m$-order discrete convolutional derivative operator $\Delta^{\star m} \in \mathbb{R}^{n}$ is defined by the recursion formula $\Delta^{ \star (m+1)} = \Delta \star \Delta^{\star m}$ with $\Delta^{ \star 1}=\Delta$. Its closed form is given by :

\begin{equation}\label{eqn:discreteDelta}
 \Delta^{\star m}_{i} = (-1)^i { m \choose i}.
\end{equation}

Let us also define $\mathds{1}$ as the identity for the convolution, and $T$ the shift operator :
\[
 \mathds{1} = \begin{cases}
               \mathds{1}_0 = 1 \\
               \mathds{1}_i = 0 \quad i \neq 0
              \end{cases} \quad \text{and} \quad T = \begin{cases}
               T_1 = 1 \\
               T_i = 0 \quad i \neq 1.
               \end{cases}
\]
The discrete derivative operator can be written as $\Delta = \mathds{1} - T$.

Given $\bal=(\bal_0,\dots,\bal_m)$ with $\bal_i \in \mathbb{R}^{+*}$, we consider the periodic Sobolev multiballs  $W_\sharp^{m,q}(\bal)$ and their discrete counterparts $\mathcal{P}_{\sharp,n}^{m,q}(\bal)$ defined as :
 \begin{eqnarray*}
 W_\sharp^{m,q}(\bal) &=& \left\{ f \in L^1_\sharp\left( [0,1] \rightarrow \mathbb{R}^d\right)  \text{ s.t. }  \| f^{(r)} \|_{L_{\sharp}^q([0,1])} \leq \bal_r \quad \forall r, 0 \leq r \leq m\right\},\\
 \mathcal{P}_{\sharp,n}^{m,q}(\bal) &=&\left\{ \p \in \mathbb{R}^{n\times d} \quad \text{s.t.} \quad n^r \| \Delta^{\star r} \star \p \|_{\ell^q} \leq \bal_r \quad \forall r, 0 \leq r \leq m \right\},
 \end{eqnarray*}
 where $L^1_\sharp$ is the set of periodic functions in $L^1$ and $f^{(r)}$ denotes the derivative of order $r$ of $f$.
 
 In the non-periodic case we define the Sobolev multiballs as :
\begin{eqnarray*}
 W^{m,q}(\bal) &=& \left\{ f \in L^1\left( [0,1] \rightarrow \mathbb{R}^d\right) \text{ s.t. }  \| f^{(r)} \|_{L^q([0,1])} \leq \bal_r \quad \forall r, 0 \leq r \leq m\right\},\nonumber\\
 \mathcal{P}_n^{m,q}(\bal) &=&\left\{ \p \in \mathbb{R}^{n\times d} \ \text{s.t.} \ n^r 
 \left(\sum_{i=r}^{n-1} \frac{1}{n} \|(\Delta^{\star r} \star  \p)_i \|^q\right)^{1/q} \!\!\!\!\!\!\! \leq \bal_r \ \forall r, 0 \leq 
 r \leq m \! \right\}\!.
\end{eqnarray*}

We consider two different discretizations of curves : given a family of points $\p$ the $0$-spline discretization is defined by :

\[
s^0(\p) : t \mapsto \p_{\lfloor nt  \rfloor},
\]
which simply amounts to considering the piecewise constant function with plateaus on the intervals $[ \frac{i}{n},\frac{i+1}{n} ] , 0 \leq i \leq n-1$. On the other hand the $1$-spline discretization is the linear interpolation between the points, it is defined by :
\[
s^1(\p) : t \mapsto \p_{\lfloor nt  \rfloor} + \{nt\} \left(\p_{\lceil nt  \rceil} - \p_{\lfloor nt  \rfloor}\right),
\]
where $\lfloor \bullet  \rfloor$, $\lceil \bullet  \rceil$ are respectively the floor and ceiling function; we denote the decimal part of a number as : $\{nt\}=nt-\lfloor nt  \rfloor \in [0,1]$.
We introduce the Sobolev multiballs of $0$-splines and of $1$-splines as :
\[
 \mathcal{S}^{m,q}_n(\bal) = \left\{ s^0(\p) \quad \text{with} \quad \p \in \mathcal{P}_n^{m,q} (\bal) \right\},
\]

\[
 \mathcal{L}^{m,q}_n(\bal) = \left\{ s^1(\p) \quad \text{with} \quad \p \in \mathcal{P}_n^{m,q} (\bal) \right\},
\]
with of course $ \mathcal{S}^{m,q}_{\sharp,n}(\bal)$ and $\mathcal{L}^{m,q}_{\sharp,n}(\bal)$ their periodic counterparts.

Finally, we specify a metric between curves. Given curves $f$ and $g$ from $[0,1]$ to $\mathbb{R}^d$, the distance between $f$ and $g$ is defined as :
\begin{equation}\label{eqn:distanceTimeSpace}
 d(f,g) = \int_0^1 \| f(t) - g(t) \| dt.
\end{equation}

The distance~\eqref{eqn:distanceTimeSpace} enforces that the set of values of $f$ and $g$ are similar but also that their time parameterizations are close. The distance~\eqref{eqn:distanceTimeSpace} is related to the 1-Wasserstein distance, if one considers :
\begin{equation*}
  \tilde{f}(t) = \left( f(t),t\right) \text{ and } c((x_1,t_1),(x_2,t_1)) = \| x_1-x_2 \| + | t_1 - t_2 |.
\end{equation*}
Denote  $d_\lambda \tilde{f}$ the push-forward of the Lebesgue measure $\lambda$ of $[0,1]$ on $\mathbb{R}^{d+1}$, that is :
\[
 \text{For any Borel set $A\subset \mathbb{R}^{d+1}$,} \ \quad d_\lambda \tilde{f} (A) = \lambda\left(\tilde{f}^{-1}(A)\right).
\]
Note that, for instance 
\[d_\lambda \tilde s^0(A,B)=\sum_{i=0}^{n-1} \frac{1}{n}\delta_{\p_i}(A)\lambda(B\cap [\frac{i}{n},\frac{i+1}{n}]) \quad \forall A \subset \mathbb{R}^p, B\subset \mathbb{R}.\]

Introduce the $1$-Wasserstein distance between the corresponding measures $d_\lambda \tilde{f}$ and $d_\lambda \tilde{g}$. We have
\[
\mathcal W^1(d_\lambda \tilde{f},d_\lambda \tilde{g})=\inf_{\gamma \in \Pi}\int_{\mathbb{R}^{d+1}\times\mathbb{R}^{d+1} } c((x,t_x),(y,t_y)) d\gamma(x\times t_x,y \times t_y),
\]
where $\Pi$ is the set of measures on $\mathbb{R}^{d+1}\times\mathbb{R}^{d+1}$ whose first and second marginals are given by $d_\lambda \tilde{f}$ and $d_\lambda \tilde{g}$ respectively. One such coupling $\gamma$ is given by the time parameter of the curve, so that :
\begin{equation}\label{eq:MajorW1}
\mathcal W^1(d_\lambda \tilde{f},d_\lambda \tilde{g}) \leq \int_{0}^{1} \left\| f(t) - g(t) \right\| dt.
\end{equation}


\section{Main result}
Introduce the Hausdorff distance between to sets of functions $A$ and $B$ by :
\[
 d_{\mathcal{H}}(A,B) = \sup_{f \in A} \inf_{g \in B} d(f,g) + \sup_{g \in B} \inf_{f \in A} d(f,g),
\]
where $d$ is defined in \eqref{eqn:distanceTimeSpace}.
Our main theorems are stated as follows,

\begin{theorem}\label{theo:firstOrderperiodic}
If $m \ge 1$, the Hausdorff distance between the multi-balls of radii $\bal$ of zero-order periodic splines and the multi-balls of radii $\bal$ of periodic Sobolev functions is bounded by $\frac{\kappa_{\bal}}{n}  $.
\end{theorem}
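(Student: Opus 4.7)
The Hausdorff distance consists of two $\sup$-$\inf$ terms and I handle each by an explicit construction. For the discretization direction $W_\sharp^{m,q}(\bal) \to \mathcal{S}^{m,q}_{\sharp,n}(\bal)$, given $f \in W_\sharp^{m,q}(\bal)$ I set $\p_i = f(i/n)$. The distance is easily controlled: writing $f(t) - f(\lfloor nt\rfloor/n) = \int_{\lfloor nt\rfloor/n}^t f'$, Fubini gives $d(f, s^0(\p)) \le \|f'\|_{L^1}/n \le \bal_1/n$ (the last step uses $q \ge 1$). To certify $\p \in \mathcal{P}_{\sharp,n}^{m,q}(\bal)$, I use the integral representation obtained by iterating the fundamental theorem of calculus,
\[
\Delta^{\star r}\star\p_i = \int_{[0,1/n]^r} f^{(r)}\!\left(\tfrac{i-r}{n} + s_1 + \cdots + s_r\right)\,ds_1\cdots ds_r.
\]
A Hölder estimate on the $r$-cube of volume $1/n^r$, followed by summation in $i$ and a careful interchange of sum and integral, reduces matters to bounding a Riemann sum of $|f^{(r)}|^q$ weighted by the B-spline density of $s_1+\cdots+s_r$; the $1/n$-periodicity of that sum in its offset collapses everything to the desired $n^r\|\Delta^{\star r}\star\p\|_{\ell^q} \le \|f^{(r)}\|_{L^q} \le \bal_r$.

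For the reverse, smoothing direction $\mathcal{S}^{m,q}_{\sharp,n}(\bal) \to W_\sharp^{m,q}(\bal)$ — the more substantive half — given $\p \in \mathcal{P}_{\sharp,n}^{m,q}(\bal)$ I define $f$ as the $m$-fold continuous convolution of $s^0(\p)$ with the rescaled box kernel $\phi = n\mathds{1}_{[0,1/n]}$. Equivalently $f(t) = \sum_i \p_i\,B(t-i/n)$ with $B = \phi^{*(m+1)}/n$ a uniform B-spline of order $m+1$. The distributional identity $(\phi^{*k})' = n\bigl(\phi^{*(k-1)} - \phi^{*(k-1)}(\cdot - 1/n)\bigr)$ gives by induction
\[
f^{(r)}(t) = n^{r-1} \sum_i (\Delta^{\star r}\star\p)_i\,\phi^{*(m-r+1)}(t - i/n), \qquad 1 \le r \le m,
\]
which is the precise mechanism trading continuous differentiation of $f$ for discrete differentiation of $\p$ at the right power of $n$. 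The partition-of-unity identity $\sum_i \phi^{*k}(t-i/n) = n$ together with Jensen's inequality then give $\|f^{(r)}\|_{L^q} \le n^r\|\Delta^{\star r}\star\p\|_{\ell^q} \le \bal_r$ for $1 \le r \le m$, and analogously $\|f\|_{L^q}\le \|\p\|_{\ell^q}\le \bal_0$. For the distance I telescope
\[
f - s^0(\p) = \sum_{k=1}^m \bigl(\phi^{*k} - \phi^{*(k-1)}\bigr) * s^0(\p),
\]
use the BV estimate $\|\phi * g - g\|_{L^1}\le \mathrm{TV}(g)/(2n)$ together with the fact that convolution does not increase total variation, and note $\mathrm{TV}(s^0(\p)) = n\|\Delta\star\p\|_{\ell^1} \le \bal_1$; this yields $d(f, s^0(\p)) \le m\bal_1/(2n)$.

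I expect the smoothing direction to be where the real work lies: the same $f$ must satisfy $m+1$ Sobolev bounds simultaneously while staying $O(1/n)$-close to the very non-smooth $s^0(\p)$. The B-spline choice is essential because its derivatives split exactly as discrete differences at the right scale, and the $L^1$-normalization combined with the partition-of-unity property turns the Sobolev bounds into direct consequences of the discrete Sobolev bounds on $\p$. A secondary technical point in the discretization direction is that a single Riemann sum of $|f^{(r)}|^q$ is not itself bounded by $\|f^{(r)}\|_{L^q}^q$; one must integrate against the B-spline density arising from the $r$-fold integration and exploit its compatibility with $1/n$-periodicity.
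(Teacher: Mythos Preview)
Your proof is correct. The discretization direction matches the paper's Proposition~3.1 essentially line for line; your ``B-spline density plus $1/n$-periodicity'' remark is exactly the role played by the paper's auxiliary functions~$\theta_i$.

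The smoothing direction, however, is genuinely different and in fact simpler than the paper's. Your approximant $f=\phi^{*m}*s^0(\p)$ coincides with the paper's B-spline $f_\p$ of Proposition~2.4, but \emph{without} the centering shift~$\sigma_m$. You then bound $\|f^{(r)}\|_{L^q}$ directly by Young's inequality (or equivalently Jensen against the partition of unity), landing \emph{exactly} in $W_\sharp^{m,q}(\bal)$; the paper instead works with $f_{\sigma_m\star\p}$, carries out the finer $\beta/\gamma$ decomposition of Proposition~3.2, obtains only $W_\sharp^{m,q}\bigl((1+\kappa_{\bal}/n^2)\bal\bigr)$, and must rescale at the end. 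For the distance you telescope and use the BV estimate $\|\phi*g-g\|_{L^1}\le \mathrm{TV}(g)/(2n)$, whereas the paper (Lemma~3.5) expands in the Eulerian-number basis and invokes the kernel-inversion Lemma~3.1.

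What each approach buys: your argument is shorter and avoids the Eulerian-number apparatus and the rescaling step entirely. The paper's heavier machinery---the shift~$\sigma_m$ and the symmetry it induces on $C^m\star\sigma_m$ (Lemma~3.2)---is not needed for Theorem~2.1 but becomes essential for Theorem~2.3, where the existence of $\Delta^{-2}(C^m\star\sigma_m-\mathds 1)$ is precisely what produces the $O(1/n^2)$ rate against $s^1(\p)$. Your telescoping/BV bound would only give $O(1/n)$ there.
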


\begin{theorem}\label{theo:firstOrdernonperiodic}
If $m \ge 1$, the Hausdorff distance between the multi-balls of radii $\bal$ of zero-order non-periodic splines and the multi-balls of radii $\bal$ of non-periodic Sobolev functions is bounded by $\frac{\kappa_{\bal}}{n}  $.
\end{theorem}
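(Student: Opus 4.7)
The plan is to establish both inequalities composing the Hausdorff distance separately, adapting periodic B-spline machinery to respect the non-periodic definitions. The feature that distinguishes the non-periodic case is that the discrete norm $\frac{1}{n}\sum_{i=r}^{n-1}\|(\Delta^{\star r}\star\p)_i\|^q$ of $\mathcal{P}_n^{m,q}(\bal)$ is truncated to indices $i\ge r$, i.e.\ precisely those for which $(\Delta^{\star r}\star\p)_i$ only involves $\p_{i-r},\dots,\p_i$ inside $\{0,\dots,n-1\}$. Thus interior estimates of the same shape as in the periodic case apply verbatim, and the only new work concerns boundary cells of width $O(1/n)$ at each end of $[0,1]$.

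\emph{Discretization direction.} Given $f\in W^{m,q}(\bal)$, set $\p_i=n\int_{i/n}^{(i+1)/n}f(t)\,dt$. Iterating the fundamental theorem of calculus yields, for $r\le m$ and $i\ge r$, a representation
\[
(\Delta^{\star r}\star\p)_i=\frac{1}{n^{r-1}}\int_{(i-r)/n}^{(i+1)/n}\chi_r(nt-i)\,f^{(r)}(t)\,dt,
\]
with $\chi_r$ a compactly supported $L^1$-normalized cardinal B-spline kernel. The constraint $i\ge r$ ensures the integration domain is contained in $[0,1]$, so a continuous-discrete Young inequality yields $n^r\|\Delta^{\star r}\star\p\|_{\ell^q}\le\|f^{(r)}\|_{L^q([0,1])}\le\bal_r$, placing $\p\in\mathcal{P}_n^{m,q}(\bal)$. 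The distance is then bounded by the cell-by-cell Poincaré estimate
\[
d(f,s^0(\p))\le\sum_{i=0}^{n-1}\int_{i/n}^{(i+1)/n}\|f(t)-\p_i\|\,dt\le\frac{1}{n}\|f'\|_{L^1([0,1])}\le\frac{\bal_1}{n},
\]
which only requires $m\ge 1$.

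\emph{Smoothing direction.} Given $\p\in\mathcal{P}_n^{m,q}(\bal)$, I construct $f$ as a B-spline synthesis $f(t)=\sum_j B_j^m(nt)\,\tilde\p_j$, where $\tilde\p$ extends $\p$ at each boundary by the unique polynomial of degree $m-1$ interpolating the $m$ nearest interior samples (so $\tilde\p_{-1},\dots,\tilde\p_{-m}$ extrapolate $\p_0,\dots,\p_{m-1}$, and symmetrically on the right). By construction $(\Delta^{\star m}\star\tilde\p)_i=0$ outside $\{m,\dots,n-1\}$. The B-spline derivative identity $f^{(r)}(t)=n^r\sum_j B_j^{m-r}(nt)\,(\Delta^{\star r}\star\tilde\p)_j$ combined with Young's inequality then delivers $\|f^{(r)}\|_{L^q([0,1])}\le n^r\|\Delta^{\star r}\star\tilde\p\|_{\ell^q}$, which should match $\bal_r$. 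A cell-by-cell Poincaré estimate on each interval of width $1/n$ gives $d(s^0(\p),f)\le\kappa_\bal/n$.

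\emph{Main obstacle.} The delicate point is the norm preservation across the polynomial extension for all intermediate orders $0\le r<m$ simultaneously: we need the smoothed function to land in the ball $W^{m,q}(\bal)$ of the \emph{same} radius, not a dilated one. The top-order vanishing is automatic by construction, but for $r<m$ the extension generates $m-r$ new nonzero discrete differences per boundary, each a fixed linear functional of $(\p_0,\dots,\p_{m-1})$. Bounding these by $\bal_r$ without a multiplicative constant requires a finite-dimensional discrete Markov-type inequality on polynomials of degree $m-1$ — the technical heart of the non-periodic case. If the natural polynomial extension inflates the norm by some $C_{m,r,q}>1$, one must refine the extension (for example, using an $\ell^q$-optimal predictor or a reflection scheme tailored to the parity of $r$) so as to recover exact preservation of each $\bal_r$; otherwise the inflation would force the target Sobolev ball to be larger than the one specified in the statement.
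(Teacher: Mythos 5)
Your discretization direction is sound and essentially matches the paper's (the paper uses point samples $\p_i=f(i/n)$ rather than cell averages, but the kernel-representation argument and the resulting bounds are the same). The genuine gap is in the smoothing direction, and you have correctly located it yourself without closing it: the polynomial extrapolation $\tilde\p$ kills $\Delta^{\star m}$ at the boundary, but for $0\le r<m$ it creates $O(m)$ new boundary differences whose size is controlled only by a Markov-type constant times the nearby interior differences. Since a single interior term $\|(\Delta^{\star r}\star\p)_i\|^q$ can carry essentially all of the mass $n\cdot(\bal_r/n^r)^q$, the boundary contribution to $\frac1n\sum_i\|(\Delta^{\star r}\star\tilde\p)_i\|^q$ can be a \emph{constant} multiple of $(\bal_r/n^r)^q$, not a $1+o(1)$ perturbation. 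A constant-factor inflation cannot be repaired afterwards: rescaling $f\mapsto\delta f$ with $\delta=1/C$, $C>1$ fixed, moves the curve by $(1-1/C)\|f\|_{L^1}=O(1)$ in the distance $d$, destroying the $\kappa_\bal/n$ rate. So the ``refine the extension'' step you defer is not a technicality; as stated, the argument does not yield a function in $W^{m,q}(\bal)$ close to $s^0(\p)$.

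The paper sidesteps extension entirely. In Lemma~\ref{lem:defin:f:non-periodic} it sets $\tilde f_\p(t)=f_{\sigma_m\star\p}(\theta t+\tau)$ with $\theta=1-20m/n$ and $\tau=10m/n$: the time variable is compressed so that the spline only ever reads indices at distance $\ge 10m$ from the ends, where the truncated discrete norm of $\mathcal P_n^{m,q}(\bal)$ gives full control, and no fictitious samples are needed. The cost of the reparametrization is a factor $\theta^{-1/q}=1+O(1/n)$ on each seminorm and an additional $O(1/n)$ term in the distance (the $\beta$ term of Lemma~\ref{lemma:7}, bounded by $2n\tau\|\Delta\star\p\|_{\ell^1}\le\kappa\bal_1/n$). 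Crucially, the paper never demands exact preservation of the radii: it accepts membership in $W^{m,q}((1+\kappa_\bal/n)\bal)$ and then multiplies by a scalar $\delta=1-O(1/n)$, which restores membership in $W^{m,q}(\bal)$ while perturbing $d$ by only $O(1/n)$ --- compatible with the claimed rate. If you want to salvage your construction, adopt this relaxation: you only need the extension (or a time compression) to inflate each $\bal_r$ by $1+O(1/n)$, after which the scalar dilation finishes the job.
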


\begin{theorem}\label{theo::second-order}
If $m \ge 2$, the Hausdorff distance between the multi-balls of radii $\bal$ of first-order periodic splines and the multi-balls of radii $\bal$ of periodic Sobolev functions is bounded by $\frac{\kappa_{\bal}}{n^2}  $.
\end{theorem}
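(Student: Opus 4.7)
\textbf{The plan} is to prove the two halves of the Hausdorff bound separately, using a sampling argument in one direction and a B-spline smoothing argument in the other.

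\textbf{From $W^{m,q}_\sharp(\bal)$ to $\mathcal{L}^{m,q}_{\sharp,n}(\bal)$.} Given $f\in W^{m,q}_\sharp(\bal)$, I would set $\p_i := n\int_{i/n}^{(i+1)/n} f(t)\,dt$ (cell averages). Using the identity $\Delta=\mathds{1}-T$, one rewrites $(\Delta^{\star r}\star\p)_i$ as a nested difference of cell averages, which unfolds via Peano's form of Taylor's remainder into an average of $f^{(r)}$ over a window of width $r/n$. Jensen's inequality then yields $n^r\|\Delta^{\star r}\star\p\|_{\ell^q}\le \|f^{(r)}\|_{L^q}\le \bal_r$, so $\p\in\mathcal{P}^{m,q}_{\sharp,n}(\bal)$. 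For the distance $d(f,s^1(\p))$, on each cell $s^1(\p)$ is the affine interpolant between $\p_i$ and $\p_{i+1}$; a second-order interpolation estimate, using $m\ge 2$ and the fact that cell averages approximate pointwise values up to an antiderivative of $f''$, yields $\int_0^1\|f-s^1(\p)\|\,dt\le \kappa_\bal/n^2$.

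\textbf{From $\mathcal{L}^{m,q}_{\sharp,n}(\bal)$ to $W^{m,q}_\sharp(\bal)$.} Given $\p\in\mathcal{P}^{m,q}_{\sharp,n}(\bal)$, the linear spline $s^1(\p)$ is only $C^0$, hence not in $W^{m,q}_\sharp$ for $m\ge 2$, and must be smoothed. I would take $f_\p:= B*s^1(\p)$ with $B$ a compactly supported, unit-mass B-spline kernel of order matched to $m$ and width $O(1/n)$. Using $(B*g)^{(r)}=B^{(r)}*g$ and the classical identity that derivatives of a B-spline are finite differences of a lower-order B-spline, one rewrites $f_\p^{(r)}$ as the convolution of (a translate of) $\Delta^{\star r}\star\p$ with a unit-mass kernel; Young's inequality, in its continuous form and in the discrete form \eqref{eq:YoungIneq}, then gives $\|f_\p^{(r)}\|_{L^q}\le n^r\|\Delta^{\star r}\star\p\|_{\ell^q}\le \bal_r$, so $f_\p\in W^{m,q}_\sharp(\bal)$. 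For the $L^1$-distance, $s^1(\p)$ is piecewise affine with slope jumps whose discrete $L^q$ norm is controlled by $\bal_2$; convolving an affine piece with a unit-mass, properly centered kernel of width $O(1/n)$ perturbs it pointwise by $O(1/n^2)\bal_2$, summing to a global $L^1$-error of order $\kappa_\bal/n^2$.

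\textbf{Main obstacle.} The hard step is the smoothing in the second direction: the kernel $B$ must simultaneously have unit mass (to preserve the $L^q$ bounds without inflation), be compactly supported of width $O(1/n)$ (to keep the $L^1$ error quadratic), and interact cleanly with $\Delta^{\star r}$ so that no slack is lost — any loss would force enlarging $\bal$ on the target ball and break the estimate at the level of the balls themselves. This is precisely what the B-spline / Eulerian-number identities announced in the introduction are designed to provide. Upgrading the rate from $1/n$ (Theorem~\ref{theo:firstOrderperiodic}) to $1/n^2$ relies on both the centering/symmetry of $B$ and the hypothesis $m\ge 2$, which makes the second-order Taylor remainder of $s^1(\p)$ available.
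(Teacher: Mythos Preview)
There is a genuine gap in your sampling direction. With the one-sided cell averages $\p_i=n\int_{i/n}^{(i+1)/n}f(t)\,dt$, a Taylor expansion about the midpoint $m_i=(i+\tfrac12)/n$ gives $\p_i=f(m_i)+O(\bal_2/n^2)$: the value $\p_i$ approximates $f$ at the \emph{midpoint} of the cell, not at the node $i/n$. Since $s^1(\p)$ places the value $\p_i$ at $t=i/n$, one obtains on each cell $s^1(\p)(t)\approx f\bigl(t+\tfrac{1}{2n}\bigr)$ up to $O(\bal_2/n^2)$, and hence
\[
d\bigl(f,s^1(\p)\bigr)\;\approx\;\int_0^1\Bigl\|f(t)-f\Bigl(t+\tfrac{1}{2n}\Bigr)\Bigr\|\,dt\;\approx\;\frac{1}{2n}\int_0^1\|f'(t)\|\,dt,
\]
which is of order $\bal_1/n$, not $\bal_2/n^2$. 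Your phrase ``cell averages approximate pointwise values up to an antiderivative of $f''$'' conceals that this antiderivative is $f'$ itself, entering at first order in $1/n$. The paper avoids the half-cell offset by taking pointwise samples $\p_i=f(i/n)$ (Proposition~\ref{prop:prop1}); centered averages $\p_i=n\int_{(i-1/2)/n}^{(i+1/2)/n}f$ would also repair your argument.

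Your smoothing direction, by contrast, is essentially the paper's: convolving $s^1(\p)=\sum_i\p_i B^1(nt-i+1)$ with a scaled B-spline of degree $m-2$ produces, after the appropriate shift, exactly the function $f_{\sigma_m\star\p}(t)=\sum_i\p_i\,B^m(nt-i)$ of Propositions~\ref{prop:prop2} and~\ref{prop:prop4}. Your norm bound via $f_\p^{(r)}=n^r\sum_i(\Delta^{\star r}\star\p)_i\,B^{m-r}(nt-i)$, the partition of unity $\sum_i B^{m-r}(\cdot-i)=1$, and Jensen is in fact \emph{sharper} than Proposition~\ref{prop:prop3}, which only lands in $W^{m,q}_\sharp\bigl((1+\kappa_\bal/n^2)\bal\bigr)$ and must then rescale by a factor $\delta$; the paper's $\beta+\gamma$ decomposition is simply not tight. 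The $O(1/n^2)$ distance bound does hinge on the centering you allude to: in the paper this is the symmetry of $C^m\star\sigma_m$ (Lemma~\ref{lemma:symetric}), which guarantees that $C^m\star\sigma_m-\mathds{1}$ admits a discrete second antiderivative $\Delta^{-2}$ and hence that the zeroth-order discrepancy already carries a factor $\Delta^{\star 2}$.
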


\begin{remark}
More precisely, Theorem~\ref{theo::second-order} states that, if $m\ge 2$ for any $f\in W^{m,q}_\sharp(\bal) $, there exists $\p \in  \mathcal{P}_{\sharp,n}^{m,q}(\bal)$ such that
\begin{equation}
\label{eq::bounds}
 \mathcal W_1(f,s^1(\p)) \le \frac{\mathcal \kappa_{\bal}}{n^2}, 
\end{equation}
and for any $\p \in  \mathcal{P}_{\sharp,n}^{m,q}(\bal)$, there exists $f\in W^{m,q}_\sharp(\bal) $ such that \eqref{eq::bounds} holds.
\end{remark}

We first describe the approximant, for the discrete to continuous case, in the following proposition.
\begin{proposition}\label{prop:prop2}
 Given a sequence of points $\p$, define the function $f_{\p}$ by
\begin{equation}\label{eqn:f}
 \forall t \in [0,1[, \ f_{\p} (t) = \sum_{i=0}^{n-1} g_i\left(nt-i \right),\quad
 g_i(x) = \sum_{k=0}^m (C^{m-k}\star \Delta^{\star k}\star \p)_i  \frac{x^k}{k!}
 \chi_{0\leq x<1}, 
\end{equation}
with $\chi_A$ the indicator function of the set $A$.
Then the two following properties are equivalent :
\begin{itemize}
 \item{}{For each $r$ and $i$ the coefficient $C^{r}_i$ satisfies 
 \begin{equation}\label{eq:CoeffCm}
   \forall r \geq 1, \quad C^r_i = \frac{E^r_{i-1}}{r!} \quad \text{and} \quad C^0 = \mathds{1}
  \end{equation}
 where $E^k_{i}$, $k\geq 1$ is the $i$-th Eulerian number of degree $k$.}
 \item{}{The curve $f_\p$ is a spline of order $m$, $m-1$ time continuously differentiable  whose $m^{th}$ order derivative is given by
\begin{equation}\label{eq:derivePlusProfonde}
f_\p^{(m)}(t) = n^m (\Delta^{\star m}\star \p)_i \quad \text{ for } t\in \left]\frac{i}{n},\frac{i+1}{n}\right[.
\end{equation}}
\end{itemize}
\end{proposition}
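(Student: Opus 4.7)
The plan is to translate the spline-regularity conditions in (ii) into an explicit convolution recursion for the kernels $C^r$, and then match that recursion against the classical one for Eulerian polynomials. The key observation is that on each open interval $(i/n, (i+1)/n)$, $f_\p(t) = g_i(nt-i)$ is a polynomial of degree at most $m$ in the local variable $x = nt - i \in (0,1)$, so termwise differentiation gives
\begin{equation*}
f_\p^{(r)}(t) = n^r \sum_{k=r}^m (C^{m-k}\star \Delta^{\star k}\star \p)_i \frac{(nt-i)^{k-r}}{(k-r)!},\quad 0 \le r \le m.
\end{equation*}
In particular the $m$-th derivative equals $n^m(C^0\star \Delta^{\star m}\star \p)_i$ on each interval, so \eqref{eq:derivePlusProfonde} holds (for every $\p$) if and only if $C^0 = \mathds{1}$; this already pins down half of the equivalence.

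Next, the $\mathcal{C}^{m-1}$ smoothness of $f_\p$ requires $g_i^{(r)}(0) = g_{i-1}^{(r)}(1)$ at every knot for $0 \le r \le m-1$. Writing $g_{i-1}^{(r)}(1)$ via its Taylor expansion, translating the index $i-1$ into a convolution with the shift $T$, and demanding the result for every $\p$, the continuity conditions become operator identities in the convolution algebra. Setting $j = m-r$ and $s = k-r$, a short rearrangement gives, for $j = 1,\dots,m$,
\begin{equation*}
\Bigl(\Delta \star C^j - \sum_{s=1}^{j} \tfrac{T}{s!}\star C^{j-s}\star \Delta^{\star s}\Bigr)\star \Delta^{\star(m-j)} = 0.
\end{equation*}
In the periodic convolution algebra this forces the bracketed expression to be a constant multiple of $\un$, and the case $j=m$ forces that constant to vanish. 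The remaining constant ambiguities (for $j<m$) are innocuous, because adding a multiple of $\un$ to $C^j$ and then convolving with $\Delta^{\star(m-j)}\star \p$ produces zero (the latter sequence has zero sum); hence they do not affect $f_\p$, and we may normalize them away to obtain the clean recursion $\Delta\star C^j = \sum_{s=1}^{j}\frac{T}{s!}\star C^{j-s}\star \Delta^{\star s}$.

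To identify $C^j$ with Eulerian kernels, I would pass to the $z$-transform: with $\widehat{\Delta}(z)=1-z$ and $\widehat T(z)=z$, the recursion reads $(1-z)\hat C^j(z) = z\sum_{s=1}^{j}(1-z)^s \hat C^{j-s}(z)/s!$, and the ansatz $\hat C^j(z) = z A_j(z)/j!$ with $A_j$ the $j$-th Eulerian polynomial reduces to a standard recursion for the $A_j$, which is verifiable either from the exponential generating function $\sum_j A_j(z)\,t^j/j! = (1-z)/(1-z e^{t(1-z)})$ or inductively from Worpitzky's identity together with the Pascal-type recursion $E^{j}_{i} = (i+1)E^{j-1}_{i} + (j-i)E^{j-1}_{i-1}$. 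Reading off the coefficients then gives $C^j_i = E^j_{i-1}/j!$, i.e.\ property (i), with the converse contained in the same chain of equivalences. The main obstacle is this last identification: one has to track the extra factor of $z$ that shifts between $\hat C^j$ and $A_j$, deal cleanly with the boundary case $j=0$, and confirm that the constant ambiguities from the periodic algebra do not sneak spurious solutions into the correspondence.
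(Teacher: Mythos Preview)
Your derivation of the convolution recursion from the $\mathcal C^{m-1}$ junction conditions is essentially the paper's: both arrive at $\Delta\star C^{j}=\sum_{s=1}^{j}\tfrac{T}{s!}\star C^{j-s}\star\Delta^{\star s}$, and you are in fact more explicit than the paper about the kernel of $\Delta^{\star(m-j)}$, which the paper hides in its ``since $\p$ is arbitrary'' step. The genuine divergence is in \emph{solving} the recursion. The paper proceeds by direct induction on $j$: it substitutes the closed form $E^{s}_{i}=\sum_{k}(-1)^{k}\binom{s+1}{k}(i-k)^{s}$, expands $\Delta^{\star(k-1)}$ via \eqref{eqn:discreteDelta}, reindexes, applies a Vandermonde-type identity, the binomial theorem, an Abel summation, and finally Pascal's rule to close the loop. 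Your $z$-transform route is more conceptual: the recursion becomes a functional relation for $\hat C^{j}(z)$, and the ansatz $\hat C^{j}(z)=zA_{j}(z)/j!$ reduces everything to a standard identity for the Eulerian exponential generating function. The paper's approach buys self-containment (no external facts about $A_{j}$ are invoked) and yields Proposition~\ref{prop:newrecurrenceRelationship} as a by-product; your approach buys brevity and a transparent explanation of \emph{why} Eulerian numbers appear. One small correction to your argument: the bracketed expression always has zero total sum (both $\Delta\star C^{j}$ and each $T\star C^{j-s}\star\Delta^{\star s}$ do, since $\sum_i\Delta_i=0$), so once it is forced to be constant it is automatically zero for every $j$, not only $j=m$; the genuine additive ambiguity sits one level up, in $C^{j}$ itself when you ``integrate'' $\Delta\star C^{j}$, and that is the ambiguity you then correctly dismiss as innocuous.
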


In the course of the proof of Proposition~\ref{prop:prop2} we prove the following seemingly new recurrence relationship between the Eulerian numbers.
\begin{proposition}\label{prop:newrecurrenceRelationship}
The Eulerian numbers are solution to each of the two recurrence equations:
\begin{align}
    \label{rec1}
    E^m_i &= \sum_{k=1}^m {m \choose k} \sum_{l=0}^{k-1} (-1)^l  { k-1 \choose l} E^{m-k}_{i-1-l}\\
    \label{rec2}
    E^m_{i-1} &= \sum_{k=0}^m {m \choose k} \sum_{l=0}^{k} (-1)^ {l-1}  { k-1 \choose l-1} E^{m-k}_{i-l}
\end{align}
\end{proposition}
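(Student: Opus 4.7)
The two recurrences are not freestanding combinatorial identities; as announced, they emerge inside the proof of Proposition~\ref{prop:prop2} as the precise algebraic conditions that guarantee the piecewise polynomial $f_\p$ defined by~\eqref{eqn:f} is $(m-1)$-times continuously differentiable. My plan is therefore to extract them as a by-product of that regularity analysis.

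The first step is to rewrite the $C^{m-1}$-regularity of $f_\p$ at each knot $t = i/n$ as the family of matching conditions $g_i^{(s)}(0) = g_{i-1}^{(s)}(1)$ for $s = 0, \dots, m-1$. Differentiating the closed form of $g_i$ term-by-term yields
\[
g_i^{(s)}(x) = \sum_{k=s}^{m} \bigl(C^{m-k}\star \Delta^{\star k}\star \p\bigr)_i \frac{x^{k-s}}{(k-s)!},
\]
so that, upon evaluating at $x=0$ and $x=1$ and setting $r := m-s$, each matching condition becomes a convolutional identity equating $C^r\star \Delta^{\star(m-r)}$ at index $i$ with a weighted sum of $C^{r-j}\star \Delta^{\star(m-r+j)}$ at index $i-1$. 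Since the identity must hold for every input $\p$ and since we may let $m$ grow while keeping $r$ fixed, one formally divides out the common factor $\Delta^{\star(m-r)}$ and obtains a clean recurrence expressing $C^r$ in terms of $C^0,\dots,C^{r-1}$ together with powers of $\Delta$.

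The second step is to plug in the ansatz $C^r_i = E^r_{i-1}/r!$ (with $C^0 = \mathds{1}$), to expand $\Delta^{\star k}_\ell = (-1)^\ell \binom{k}{\ell}$ inside the convolutions, and to clear denominators by multiplying through by $r!$. The resulting double sum over $k$ and $\ell$ can then be reorganized in two natural ways: isolating the $k=0$ contribution and applying Pascal's identity $\binom{k}{\ell} = \binom{k-1}{\ell} + \binom{k-1}{\ell-1}$ collapses the inner binomials to $\binom{k-1}{\ell}$ and yields \eqref{rec1}; a parallel reorganization starting instead with the shifted convolution index delivers \eqref{rec2}.

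I expect the main obstacle to be the combinatorial bookkeeping in this last step: tracking the shifted arguments $E^{m-k}_{i-1-\ell}$ versus $E^{m-k}_{i-\ell}$, the alternating signs $(-1)^\ell$, and the telescoping between $\binom{k}{\ell}$ and the pair $\binom{k-1}{\ell}$, $\binom{k-1}{\ell-1}$ while reshaping the double sum is delicate and must land exactly on the ranges $0\le\ell\le k-1$ and $1\le k\le m$ dictated by \eqref{rec1}--\eqref{rec2}. A secondary subtlety is legitimising the cancellation of $\Delta^{\star(m-r)}$: this convolution operator admits the constant sequences as its kernel, so the cancellation is valid only as an identity in the convolution algebra, which the universality of the matching condition over all $\p$ (together with the freedom to enlarge $m$) provides.
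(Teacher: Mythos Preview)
Your plan is correct and coincides with the paper's approach: the paper derives the matching conditions~\eqref{eqn:Lemma1rec1} and~\eqref{eqn:Lemma1rec2} exactly as you outline, establishes within Proposition~\ref{prop:prop2} that $C^r_i = E^r_{i-1}/r!$ solves them, and then obtains Proposition~\ref{prop:newrecurrenceRelationship} by substituting this formula back into~\eqref{eqn:Lemma1rec1}--\eqref{eqn:Lemma1rec2} and expanding $\Delta^{\star k}$ via~\eqref{eqn:discreteDelta}. The only cosmetic difference is that the paper first rewrites~\eqref{eqn:Lemma1rec1} in the form~\eqref{eq:compatibilitySpline}, which carries $\Delta^{\star(k-1)}$ rather than $\Delta^{\star k}$, so the coefficient $\binom{k-1}{l}$ appears directly and the Pascal-rule telescoping you anticipate is not needed.
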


\begin{proof}[Propositions~\ref{prop:prop2} and~\ref{prop:newrecurrenceRelationship}]

 Let $f_{\p}$ be the function defined in Equation \eqref{eqn:f}. It is trivial to see that Equation \eqref{eq:derivePlusProfonde} is true if $C^0$ is the convolution identity kernel. It remains to check the regularity at the connections, indeed the $l$-th derivatives on the right and on the left of the spline $f_{\p}$ have to be equal at each connection, that is, for each $i \in \llbracket 0, n-1  \rrbracket$ and $l \in \llbracket 0, m-1 \rrbracket$
\[
 \lim_{t \rightarrow 0} g^{(l)}_{i+1}(t) = \lim_{t \rightarrow 1} g^{(l)}_i(t).
\]
This gives the following equation : 
\begin{align}\label{eqn:Lemma1rec1}
\left( C^{m-l} \star  \Delta^{\star l} \star  \p \right)_{i+1} =& \sum_{k=l}^{m} \frac{1}{(k-l)!}\left( C^{m-k} \star  \Delta^{\star k} \star  \p \right)_{i} \nonumber \\
=& \sum_{k=0}^{m-l} \frac{1}{k!}\left( C^{m-l-k} \star  \Delta^{\star (k+l)} \star  \p \right)_{i},
\end{align}
Since $\p$ is arbitrary, it can be removed, this yields : 
\begin{equation*}
 \sum_{k=0}^{s} \frac{1}{k!}\left( T\star C^{s-k} \star  \Delta^{\star k}  \right) = C^{s}, 0 \leq s \leq m.
\end{equation*}
Subtracting the first term of the sum from the right hand side one has 
\begin{equation}\label{eqn:Lemma1rec2}
 \sum_{k=1}^{s} \frac{1}{k!}\left( T\star C^{s-k} \star  \Delta^{\star k}  \right)_i = (C^{s} - T\star C^{s})_i = C^{s}_i - C^{s}_{i-1} = (\Delta\star C^{s})_i.
\end{equation}

Finally, $f_{\p}$ is $m-1$ continuously differentiable if and only if the coefficient $C$ verify the recursive formula:
 \begin{equation}\label{eq:compatibilitySpline}
 \forall \  1 \leq s \leq m, \quad  \sum_{k=1}^{s} \frac{1}{k!}\left( T\star C^{(s-k)} \star  \Delta^{\star k-1}  \right) = C^{ s}.
\end{equation}

 We turn our attention to solving  Equation~\eqref{eq:compatibilitySpline} and to obtain that for all $s$,
 \begin{equation*}
  C^s_i = \frac{E^s_i}{s!} \quad \text{with} \quad E^s_i= \sum_{k=0}^{i} (-1)^k {s+1 \choose k} (i -k)^s \quad \text{ and } E^0_i = 1 \text{ iff } i=0,
 \end{equation*}
 where $E^k_{i+1}$, $k\geq1$ is the $i-th$ Eulerian number~\cite{comtet2012advanced} of degree $k$. Suppose the formula for $C^r$ is valid up to $r=s-1$, replacing $C^{s-k}$ by its value in Equation~\eqref{eq:compatibilitySpline} and replacing $\Delta^{\star k}$ by its expression \eqref{eqn:discreteDelta} one obtains:
\begin{equation*}
 s!C^s_i = \sum_{k=1}^s {s \choose k} \sum_{l=0}^{k-1} (-1)^l  { k-1 \choose l} \sum_{r=0}^{i-1-l} (-1)^r {s+1 -k \choose r} (i-1-l-r)^{s-k}.
\end{equation*}
Changing the index $r$ by $q=r+l$ and extending the summation of $q$ from $l$ to $0$ one gets
\begin{equation*}
s!C^s_i =\sum_{q = 0}^{i-1} (-1)^q \sum_{k=1}^s {s \choose k} (i-1-q)^{s-k} \sum_{l=0}^{k-1} { k-1 \choose l } { s+1 -k \choose q-l}.
\end{equation*}
Summing in $l$ the right hand side, one has
\begin{equation*}
s!C^s_i =\sum_{q = 0}^{i-1} (-1)^q { s \choose q} \sum_{k=1}^s { s \choose k } (i-q -1)^{s-k}.
\end{equation*}
Now using the binomial theorem,
\begin{equation*}
s!C^s_i =\sum_{q = 0}^{i-1} (-1)^q { s \choose q} \left((i-q)^s - (i-(q+1))^s\right),
\end{equation*}
an Abel transform gives
\begin{equation*}
s!C^s_i =\sum_{q = 1}^{i-1} (-1)^q (i-q)^s   \left[{s \choose q} + {s\choose q - 1} \right] + 1.
\end{equation*}
Finally, Pascal's rule yields formula~ $s!C^s_i=E^s_i$ and the proof of Proposition~\ref{prop:prop2} is finished. $ \square \quad$

In order to prove Proposition~\ref{prop:newrecurrenceRelationship}, we rewrite Equations~\eqref{eqn:Lemma1rec1},\eqref{eqn:Lemma1rec2} where we substitute $C$ with the corresponding Eulerian number $E$ using formula~\eqref{eq:CoeffCm}.
\end{proof}

\begin{proposition}\label{prop:prop4}
 The periodic spline $f_{\p}$ satisfying Equalities~\eqref{eq:CoeffCm},\eqref{eq:derivePlusProfonde} can be expressed in the B-Spline basis. It turns out that the control points are exactly the $\p_i$ :
\begin{align} \label{eq:BsplinePolynomial}
 f_\p(t) &= \sum_{i\in \mathbb{Z}}  B^{m}(nt-i) \p_i\\
 \text{ with } \quad B^{m}(x) &= \frac{1}{m!}\sum_{k=0}^{m+1} (-1)^k {m + 1 \choose k } (x-k)^m_{+},\nonumber
\end{align}
where $(a)^m_+$ denotes the $m$-th power of the positive part of $a$. Note that the formula defined is periodic and the support of $B^{m}$ is $[0,m+1[$, so that, for a fixed $t$, $f_\p$  is a finite sum.
\end{proposition}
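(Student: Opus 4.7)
The plan is to introduce $g_\p(t) := \sum_{i\in\mathbb{Z}} B^m(nt-i)\,\p_i$ and prove $g_\p \equiv f_\p$ by invoking Proposition~\ref{prop:prop2}: it suffices to check that $g_\p$ is a periodic $C^{m-1}$ spline of degree $m$ on the knots $\{i/n\}$ with the same $m$-th derivative as $f_\p$, and then to pin down a single value. Since $B^m$ is by construction a $C^{m-1}$ piecewise polynomial of degree $m$ with integer breakpoints and compact support $[0,m+1]$, the sum defining $g_\p$ is locally finite and $g_\p$ automatically lies in the same spline space.

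For the $m$-th derivative, I would compute, using $\tfrac{d^m}{dx^m}(x-k)^m_+ = m!\,\chi_{x>k}$,
\[
(B^m)^{(m)}(x) = \sum_{k=0}^{m+1}(-1)^k\binom{m+1}{k}\chi_{x>k},
\]
and apply the telescoping identity $\sum_{k=0}^{l}(-1)^k\binom{m+1}{k} = (-1)^l\binom{m}{l}$ (a one-line consequence of Pascal's rule) to obtain $(B^m)^{(m)}(x) = (-1)^l\binom{m}{l}$ for $x\in(l,l+1)$, $0\le l\le m$. Reindexing the finite sum by setting $i=j-l$ for $t\in(j/n,(j+1)/n)$ then gives
\[
g_\p^{(m)}(t) = n^m\sum_{l=0}^m (-1)^l\binom{m}{l}\p_{j-l} = n^m(\Delta^{\star m}\star\p)_j,
\]
which matches $f_\p^{(m)}$ by~\eqref{eq:derivePlusProfonde}.

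Consequently $f_\p - g_\p$ is a periodic $C^{m-1}$ spline of degree $m$ with vanishing $m$-th derivative, hence a single polynomial of degree at most $m-1$ on $[0,1]$, hence a constant by periodicity. To see that this constant is zero, I evaluate at $t=0$: on one side $f_\p(0)=g_0(0)=(C^m\star\p)_0$; on the other side, comparing the definition of $B^m$ with the formula $E^m_l = \sum_{k=0}^{l}(-1)^k\binom{m+1}{k}(l-k)^m$ established in the proof of Proposition~\ref{prop:prop2} yields $B^m(l)=E^m_l/m!=C^m_l$, since the $k=l$ term contributes $0^m=0$. Hence
\[
g_\p(0) = \sum_{l=0}^m B^m(l)\,\p_{-l} = \sum_{l=0}^m C^m_l\,\p_{-l} = (C^m\star\p)_0,
\]
and the constant is zero.

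The main substantive point is the last identification of the integer values of the uniform B-spline with the coefficients $C^m$; it is a one-line computation from the explicit formulas, but it is what makes the proposition work, explaining why the Eulerian coefficients that assemble $f_\p$ in Proposition~\ref{prop:prop2} reappear as the B-spline control-point weights, and confirming the link with Eulerian numbers emphasized in the introduction.
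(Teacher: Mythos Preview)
Your proof is correct and follows the same overall strategy as the paper's: show that the B-spline expression has the prescribed $m$-th derivative~\eqref{eq:derivePlusProfonde}, then use periodicity to reduce the difference $f_\p-g_\p$ to a constant. The executions differ in two places. First, the paper obtains the $m$-th derivative by invoking the standard differentiation rule for uniform B-splines (citing Schumaker), whereas you compute it directly from the explicit formula via the partial-sum identity $\sum_{k=0}^{l}(-1)^k\binom{m+1}{k}=(-1)^l\binom{m}{l}$; your route is self-contained. Second, and more substantively, the paper disposes of the residual constant by appealing to the ``homogeneity of $f_{\p}$ in $\p$'', which is terse to the point of being cryptic (both expressions are linear in $\p$, so linearity alone does not obviously force the constant to vanish). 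Your explicit evaluation at $t=0$, together with the identification $B^m(l)=E^m_l/m!=C^m_l$ read off from the two closed formulas, is cleaner and actually pins the constant down; it also makes the link between the B-spline values at the knots and the Eulerian coefficients $C^m$ explicit, which is exactly the point the proposition is advertising.
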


\begin{proof}[Proposition~\ref{prop:prop4}]
 The function which satisfies condition \eqref{eq:derivePlusProfonde} is defined up to the addition of a polynomial of degree $m-1$. This polynomial has to be periodic then it is reduced to a constant. Finally the homogeneity of $f_{\p}$ in $\p$ yields its uniqueness.

It remains to show that the function defined in \eqref{eq:BsplinePolynomial} satisfies condition~\eqref{eq:derivePlusProfonde}.
Using the differentiation formula of the equispaced B-spline~\cite{schumaker2007spline}, the $k$-th derivative of $f_\p(t)$ is given by :
\begin{equation*}
 f^{(k)}_\p(t) = \sum_{i \in \mathbb{Z}} \p_i n^k \sum_{j=0}^{k} (-1)^j {k \choose j} B^{m-k} (nt-i-j)
\end{equation*}
Since $B^0(t) = \chi_{t\in[0,1[}$ $m$-th derivative reads as :
\begin{equation*}
 f^{(m)}_\p(t) = \sum_{i\in \mathbb{Z}} \p_i n^m \sum_{j=0}^{m} (-1)^j {m \choose j} \chi_{t\in[\frac{i+j}{n} ,\frac{i+1+j}{n}[}.
\end{equation*}

\begin{equation*}
 f^{(m)}_\p(t) = \sum_{i \in \mathbb{Z}}  n^m \sum_{a=0}^{m} (-1)^a {m \choose a} \p_{i-a} \chi_{t\in[\frac{a}{n},\frac{a}{n}[}.
\end{equation*}
For $t$ in $[\frac{i}{n},\frac{i+1}{n}]$ one has
\begin{equation*}
 f^{(m)}_\p(t) = n^m \sum_{k=0}^m (-1)^k {m \choose k} \p_{i-k} = n^m \left(\Delta^{\star m } \star \p \right)_i
\end{equation*}
which allows to conclude.
\end{proof}

\section{Proof of the theorems}
This section deals with the proofs of the main theorems. It is subdivided into $4$ sections. In Section~\ref{sec:notationTechnicalLemmas}, we introduce some useful results and operators used throughout the rest of the proof. In Section~\ref{sec:approximationBySplines}, we construct the spline approximation when the continuous curve is given and show that the distance between the spline and the continuous curve is bounded with the correct rate with respect to $n$. In Section~\ref{sec:approximationByFunctions}, we construct a continuous curve when the spline approximation is given. In Section~\ref{sec:wassersteinDistance} the distance between constructed continuous curve and the given spline approximation is proven with the correct rate but the continuous curve does not belong to the correct multi-ball. Finally in Section~\ref{sec:proofOfTheorems}, we gather the results of the different sections and prove the main theorems.  
\subsection{Notations and technical lemmas}\label{sec:notationTechnicalLemmas}

In the following, we make extensive use of the shift operator $\sigma_m$ defined as 
\begin{equation}\label{eq:kernelDelay}
 \begin{cases}
(\sigma_m \star  \p)_i = \p_{i+\frac{m+1}{2}}& \text{ if }m\text{ is odd} \\ 
&\\
(\sigma_{m} \star  \p)_i = \frac{1}{2} \left( \p_{i+ m/2} + \p_{i+ m/2 +1} \right) & \text{ if }
m \text{ is even}\end{cases}.
\end{equation}

Moreover, we need a notion of support of the convolution kernel, this notion is well suited to the non-periodic case and is only useful in this context.
\begin{lemma}\label{lemma:normShifted} Let $\alpha ,\beta\in \mathbb{N}$ with $\alpha+\beta < n$, we say that a kernel $K$ has support in $[-\alpha,\beta]$ if  $K_i =0$ for each $\beta<i<n-\alpha$. For such a kernel $K$, we have, for all $A\in \mathbb{R}^n$, for all $a\ge \beta$ and $b< n-\alpha$,
 \[
  \left(\sum_{i=a}^b \left\| (K\star A)_i \right\|^q\right)^\frac{1}{q} \leq n\Vert K\Vert_{\ell^1}\left(\sum_{i=a-\beta}^{b+\alpha} \left\| A_i \right\|\right)^\frac{1}{q}.
 \]
\end{lemma}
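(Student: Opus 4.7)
The plan is to unfold the definition of the convolution, use the support hypothesis to rewrite $(K\star A)_i$ as a short window-sum over at most $\alpha+\beta+1$ terms, and then apply Minkowski's inequality in $\ell^q$. (I read the right-hand side as involving $\|A_i\|^q$ inside the sum; the exponent appears to be missing from the statement.)

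First I would unfold the convolution: because $K_j=0$ for $\beta<j<n-\alpha$, only two blocks survive,
\[
(K\star A)_i \;=\; \sum_{j=0}^{\beta} K_j\, A_{i-j} \;+\; \sum_{j=n-\alpha}^{n-1} K_j\, A_{i-j}.
\]
Using the periodic abuse $A_{i-j}=A_{i-j+n}$, I reindex the second block via $k=j-n\in\{-\alpha,\dots,-1\}$ and set $\widetilde K_k=K_{k+n}$ for $-\alpha\le k\le -1$ and $\widetilde K_k=K_k$ for $0\le k\le \beta$. This rewrites the convolution as the window-sum
\[
(K\star A)_i \;=\; \sum_{k=-\alpha}^{\beta} \widetilde K_k\, A_{i-k},
\qquad \sum_{k=-\alpha}^{\beta}\!|\widetilde K_k| \;=\; \sum_{j=0}^{n-1}|K_j| \;=\; n\,\|K\|_{\ell^1},
\]
the last equality being the renormalization convention of the paper.

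Next I would exploit the hypotheses $a\ge\beta$ and $b<n-\alpha$. For every $i\in\{a,\dots,b\}$ and every $k\in\{-\alpha,\dots,\beta\}$ one has $a-\beta\le i-k\le b+\alpha$, and by the hypotheses this lies in $\{0,\dots,n-1\}$. Hence no periodic wrap-around of $A$ is triggered, and a mere translation of the summation index gives, for each fixed $k$,
\[
\sum_{i=a}^{b}\|A_{i-k}\|^q \;\le\; \sum_{i=a-\beta}^{b+\alpha}\|A_i\|^q.
\]

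Finally I apply Minkowski's inequality in $\ell^q$ to the finite sum of translates $(K\star A)_i=\sum_k \widetilde K_k A_{i-k}$, which yields
\[
\Bigl(\sum_{i=a}^b \|(K\star A)_i\|^q\Bigr)^{1/q}
\;\le\; \sum_{k=-\alpha}^{\beta} |\widetilde K_k|\,\Bigl(\sum_{i=a}^b \|A_{i-k}\|^q\Bigr)^{1/q},
\]
and combining with the translation bound and the identity $\sum_k|\widetilde K_k|=n\|K\|_{\ell^1}$ gives exactly the claimed inequality.

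There is no genuine obstacle: the lemma is essentially a bookkeeping statement. The only care required is in translating the support assumption, which is phrased on $K$ as a periodic vector, into a genuine integer window $[-\alpha,\beta]$, and in verifying that the hypotheses $a\ge\beta$ and $b<n-\alpha$ are precisely what guarantees the absence of wrap-around when this window is slid across $\{a,\dots,b\}$. The factor $n$ on the right-hand side is merely an artifact of the renormalized $\ell^1$ norm.
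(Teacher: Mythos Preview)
Your proof is correct. The paper states this lemma without proof (it is listed among the technical lemmas in Section~3.1 but no argument is supplied), so there is nothing to compare against; your Minkowski-based argument is exactly the natural one and all the bookkeeping is right. Your reading of the statement is also correct: the right-hand side should carry $\|A_i\|^q$ inside the sum, and this is what your proof establishes.
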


Finally we introduce the operator $\Delta^{-1}$, the inverse of the operator $\Delta$.
\begin{lemma}\label{lemma1ODDO} Let $\alpha ,\beta\in \mathbb{N}$. If $A$ has support in $[-\alpha,\beta]$ and verifies $\sum_{i=0}^{n-1}A_i=0$, define $\Delta^{-1}(A)$ as
\[\Delta^{-1}(A)_i =\sum_{j=0}^i A_j-\sum_{j=0}^{\beta} A_j.\]
Then $\Delta \star \Delta^{-1}(A)=A$ . Moreover $\Delta^{-1}(A)$ has support in 
 $[-\alpha,\beta-1]$ and $\Vert \Delta^{-1}(A)\Vert_{\ell^1}\le (\beta+\alpha) \Vert A\Vert_{\ell^1}$.
\end{lemma}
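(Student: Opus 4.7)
The plan is to verify in turn the three claims of the lemma: that $\Delta^{-1}(A)$ is a left-inverse of $\Delta$, that it has the claimed narrower support, and that the announced $\ell^1$ bound holds. All three reduce to direct manipulation of the defining formula, but some care is needed around the periodic wraparound.

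For the identity $\Delta\star\Delta^{-1}(A)=A$, I would simply expand
$(\Delta\star\Delta^{-1}(A))_i = \Delta^{-1}(A)_i - \Delta^{-1}(A)_{i-1}$.
For $i \ge 1$ the telescoping between $\sum_{j=0}^{i}A_j$ and $\sum_{j=0}^{i-1}A_j$ produces $A_i$ at once, and the offset $-\sum_{j=0}^{\beta}A_j$ cancels. The only nontrivial index is $i=0$, where the periodic convention forces $\Delta^{-1}(A)_{-1}=\Delta^{-1}(A)_{n-1}=\sum_{j=0}^{n-1}A_j-\sum_{j=0}^{\beta}A_j$; here the hypothesis $\sum_{j=0}^{n-1}A_j=0$ is exactly what is needed to recover $A_0$.

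For the support claim, I would take $i$ with $\beta\le i\le n-\alpha-1$ and split
$\sum_{j=0}^{i}A_j=\sum_{j=0}^{\beta}A_j+\sum_{j=\beta+1}^{i}A_j$.
Since $A$ vanishes on $\{\beta+1,\ldots,n-\alpha-1\}$, the second sum is zero, giving $\Delta^{-1}(A)_i=0$. Hence the only possibly nonzero indices of $\Delta^{-1}(A)$ lie in $\{0,\ldots,\beta-1\}\cup\{n-\alpha,\ldots,n-1\}$, i.e.\ the support is contained in $[-\alpha,\beta-1]$.

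For the norm bound, on the first block $0\le i\le \beta-1$ the same splitting rewrites $\Delta^{-1}(A)_i=-\sum_{j=i+1}^{\beta}A_j$, so $\|\Delta^{-1}(A)_i\|\le \sum_{j=0}^{\beta}\|A_j\|$; on the second block $n-\alpha\le i\le n-1$ one similarly gets $\Delta^{-1}(A)_i=\sum_{j=n-\alpha}^{i}A_j$, bounded by $\sum_{j=n-\alpha}^{n-1}\|A_j\|$. Summing over the $\beta$ indices of the first block and the $\alpha$ indices of the second and combining with $\sum_j\|A_j\|=\sum_{j=0}^{\beta}\|A_j\|+\sum_{j=n-\alpha}^{n-1}\|A_j\|$ yields
\[\sum_{i=0}^{n-1}\|\Delta^{-1}(A)_i\|\le (\alpha+\beta)\sum_{j=0}^{n-1}\|A_j\|,\]
and dividing by $n$ to account for the renormalized $\ell^1$ norm gives the stated inequality. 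I do not expect any real obstacle here; the only pitfall is the $i=0$ case in the first step, where the zero-sum hypothesis is indispensable.
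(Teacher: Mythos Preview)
Your proof is correct and complete. The paper actually states this lemma without proof, treating it as an elementary technical fact, so there is no argument in the paper to compare against; your direct verification---checking the telescoping identity (with the zero-sum hypothesis handling the wraparound at $i=0$), reading off the support from the vanishing of $A$ on $\{\beta+1,\dots,n-\alpha-1\}$, and bounding the two nonzero blocks separately---is exactly the natural route and would be what the authors have in mind.
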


%
%
%


 Next we gather some results about the Eulerian numbers in the following lemma.
\begin{lemma}\label{lemma:symetric}
 For $m \geq 2$ the kernel $C^m$ sums up to 1 and have support in $[0,m]$ and the kernel $C^m \star \sigma_m$ is symmetric. Moreover if 
   \[A=C^m \star \sigma_m - \mathds{1}, \]
then $\Delta^{-2}(A)$ exists. For $m=0,1$, then $A=0$.
\end{lemma}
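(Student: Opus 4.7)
The plan is to reduce each of the four assertions — sum equal to $1$, support in $[0,m]$, symmetry of $C^m \star \sigma_m$, and existence of $\Delta^{-2}(A)$ — to standard facts about Eulerian numbers, combined with the structural Lemma~\ref{lemma1ODDO}. I will use throughout the identification $C^m_i = E^m_{i-1}/m!$ furnished by Proposition~\ref{prop:prop2}.

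Sum and support are immediate. From the classical identity $\sum_{j=0}^{m-1} E^m_j = m!$ I get $\sum_i C^m_i = 1$; and since $E^m_j$ vanishes outside $0 \le j \le m-1$, the kernel $C^m$ is supported in $[1,m] \subset [0,m]$.

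For the symmetry of $C^m \star \sigma_m$, the key input is the Eulerian reflection $E^m_j = E^m_{m-1-j}$, equivalently $C^m_k = C^m_{m+1-k}$: the kernel $C^m$ is symmetric about $(m+1)/2$. The definition of $\sigma_m$ is tailored precisely to translate this center of symmetry to $0$: for $m$ odd, $\sigma_m$ is a pure backward shift by the integer $(m+1)/2$, while for $m$ even it averages the backward shifts by $m/2$ and $m/2+1$, matching the half-integer $(m+1)/2$. A short substitution in each parity case then gives $(C^m \star \sigma_m)_i = (C^m \star \sigma_m)_{-i}$.

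For the existence of $\Delta^{-2}(A)$ I apply Lemma~\ref{lemma1ODDO} twice. Both $C^m \star \sigma_m$ and $\mathds{1}$ have finite support and sum $1$ (using that $\sigma_m$ sums to $1$ in both parities), so $A$ has finite support and $\sum_i A_i = 0$, and Lemma~\ref{lemma1ODDO} furnishes $\Delta^{-1}(A)$. To iterate once more I must verify $\sum_i \Delta^{-1}(A)_i = 0$. Abel summation applied to $A_i = \Delta^{-1}(A)_i - \Delta^{-1}(A)_{i-1}$, using the compact support of $\Delta^{-1}(A)$ to kill the boundary terms, yields the clean identity $\sum_i \Delta^{-1}(A)_i = -\sum_i i\, A_i$, and this first moment vanishes by the symmetry of $A$ about $0$ established just above. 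The step I expect to be slightly finicky is matching the index conventions between $\sigma_m$ and the explicit formula for $\Delta^{-1}$ in the symmetry argument; everything else is bookkeeping. Finally, for $m=1$ a direct computation gives $C^1 = T$ and $\sigma_1 = T^{-1}$ as kernels, so that $C^1 \star \sigma_1 = \mathds{1}$ and $A=0$; the $m=0$ case is an analogous immediate check.
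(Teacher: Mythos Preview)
Your proposal is correct and follows essentially the same route as the paper: Eulerian reflection gives the symmetry of $C^m \star \sigma_m$, and then this symmetry forces $\sum_i \Delta^{-1}(A)_i = 0$ so that Lemma~\ref{lemma1ODDO} can be applied a second time. Your Abel-summation identity $\sum_i \Delta^{-1}(A)_i = -\sum_i i\,A_i$ is a slightly cleaner packaging of the paper's explicit index-pairing $j \leftrightarrow n-j$, and you cite $\sum_k E^m_k = m!$ as classical where the paper reproves it by induction, but the substance is identical.
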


\begin{proof}[Lemma~\ref{lemma:symetric}]
First we recall the following well-known properties of Eulerian numbers, valid for  $m \geq 1$, see \cite{comtet2012advanced}
\[  E^m_k = E^m_{m-k+1},
 \quad \text{ and }
   E^m_k = (m-k+1) E^{m-1}_{k-1} + k E^{m-1}_k
\]
We prove $\sum_{k=1}^{m+1} C^m_k=\sum_{k=0}^m \frac{E^m_k}{m!} = 1$ by an induction on $m$.
\begin{eqnarray*}
  \sum_{ 1 \leq k \leq m} E^m_k &=& \sum_{ 1 \leq k \leq m}  (m-k+1) E^{m-1}_{k-1} + k E^{m-1}_k\\
  &=& \sum_{k=1}^{m-1} (m-k) E^{m-1}_{k} + k E^{m-1}_k = m \sum_{k=1}^{m-1} E^{m-1}_{k} = m! \quad  \square
 \end{eqnarray*}
 We now study the symmetry of $C^m \star \sigma_m$. If $m$ is odd this property is a direct consequence $E^m_i = E^m_{m-i+1}$. If $m$ is even, simply notice that 
 \begin{eqnarray*}
  (C^m \star \sigma_m)_i &=& \frac{1}{2} \left(C^m_{i + \frac{m}{2}} + C^m_{i + \frac{m}{2}+1}\right)\\
		       &=& \frac{1}{2} \left(C^m_{-i + \frac{m}{2}+1} + C^m_{-i + \frac{m}{2}}\right)= (C^m \star \sigma_m)_{-i}.
 \end{eqnarray*}The coefficients of $A$ sum up to $0$ and $A$ has support in $[-\left\lfloor\frac{m}{2}  \right\rfloor,\left\lfloor\frac{m}{2}  \right\rfloor]$ so that $\Delta^{-1}(A)$ exists. For $n$ large enough, we now prove that the symmetry of $A$ ensures that the coefficients of  $\Delta^{-1}(A)$ sum up to $0$. For that purpose, for each $j$, denote $u=n-j$ so that $A_u=A_j$.
 \begin{align*}
  \sum_{i=0}^{n-1} \Delta^{-1}(A)_i &= \sum_{i=0}^{n-1} \left(\sum_{j=0}^i A_j \right) - n\sum_{j=0}^\beta A_j =  \sum_{j=0}^{n-1} (n-j) A_j  - n\sum_{j=0}^\beta A_j\\
  &= \frac{1}{2} \left( \sum_{u=1}^{n} u A_{u} + \sum_{j=0}^{n-1} (n-j) A_{j} \right) - \frac{n}{2}\left(\sum_{j=0}^\beta A_j +\sum_{u=n-\beta}^n A_{u} \right)\\
  &= \frac{1}{2} \left( \sum_{u=1}^{n} u A_{u} - \sum_{j=0}^{n-1} j A_{j} \right) - \frac{n}{2}\left(\sum_{j=0}^{n-1} A_j +A_n \right)\\
  &= \frac{n}{2} A_n - \frac{n}{2} A_n = 0.
 \end{align*}
Hence $\Delta^{-2}(A)$ exists and has support in $[-\left\lfloor\frac{m}{2}  \right\rfloor,\left\lfloor\frac{m}{2}  \right\rfloor - 1]$. Moreover,we have
 \[
 \Vert\Delta^{-2}(A)\Vert_{\ell^1}\le \frac{\kappa_m}{n} \quad \text{ for } n \geq m.
 \]
\end{proof}

\subsection{Approximation of function by splines}\label{sec:approximationBySplines}
We now describe the approximating spline for a continuous curve in the periodic and non-periodic case. We prove that the approximations belong to the correct Sobolev multi-balls. We also prove that the $\mathcal{W}^1$-distance between this approximation and the continuous curve is bounded with the correct rates.

\begin{proposition}\label{prop:prop1}
 Let $f  \in W_\sharp^{m,q}(\bal)$, (resp. $f\in W^{m,q}(\bal)$),  define $\p\in \mathbb{R}^{n\times d}$  as $\p_i = f\left(\frac{i}{n} \right)$ for all $i=0\dots n-1$ .
 \begin{itemize}
 \item{}{Then $\p \in \mathcal{P}_{\sharp,n}^{m,q}(\bal)$ (resp. $\mathcal{P}_{n}^{m,q}(\bal)$)}
  \item{}{If $m \ge 1$, then  $\displaystyle{ d(f,s^0(\p)) \leq \frac{\bal_1}{n}}$.}
   \item{If $m \ge 2$, then $\displaystyle{ d(f,s^1(\p)) \leq \frac{\bal_2}{n^2}}$.}
 \end{itemize}
 This proposition states that the distance from the multi-balls of Sobolev functions to the set of splines  behaves exactly as stated in Theorems~\ref{theo:firstOrderperiodic},\ref{theo:firstOrdernonperiodic} and \ref{theo::second-order}.
\end{proposition}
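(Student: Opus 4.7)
The plan is to establish the three bullet points separately. For the multi-ball membership, which I expect to be the main obstacle, the idea is to express each discrete derivative as a continuous convolution of $f^{(r)}$. Iterating the fundamental theorem of calculus (valid for $1\le r\le m$ since $f^{(r-1)}$ is then absolutely continuous) yields
\[
 (\Delta^{\star r}\star\p)_i \;=\; \int_{[0,1/n]^r} f^{(r)}\!\left(\tfrac{i}{n} - t_1 - \cdots - t_r\right) dt_1\cdots dt_r,
\]
equivalently $n^r(\Delta^{\star r}\star\p)_i = (\phi_r * f^{(r)})(i/n)$, where $*$ denotes continuous convolution and $\phi_r$ is the $r$-fold convolution of $n\chi_{[0,1/n]}$, a probability density supported on $[0,r/n]$. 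Applying Jensen's inequality to $\|\cdot\|^q$ then gives the pointwise bound $\|n^r(\Delta^{\star r}\star\p)_i\|^q \le (\phi_r * \|f^{(r)}\|^q)(i/n)$.

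The obstacle is to convert the discrete average $\tfrac{1}{n}\sum_i (\phi_r * h)(i/n)$ into the continuous integral $\int_0^1 h$. The key ingredient I would use is the partition-of-unity identity
\[
 \sum_{i\in\mathbb Z}\phi_r(x - i/n) \;\equiv\; n \qquad\text{for every }x\in\mathbb R,
\]
immediate for $r=1$ since the translates of $n\chi_{[0,1/n]}$ tile $\mathbb R$, and preserved under further convolution with $n\chi_{[0,1/n]}$. In the periodic case, exchanging the order of summation and integration via Fubini and invoking this identity yields exactly $\tfrac{1}{n}\sum_{i=0}^{n-1}\|n^r(\Delta^{\star r}\star\p)_i\|^q \le \int_0^1\|f^{(r)}\|^q \le \bal_r^q$. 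In the non-periodic case, the restriction $i\ge r$ in the definition of $\mathcal{P}_n^{m,q}$ together with $\operatorname{supp}\phi_r\subset[0,r/n]$ keeps arguments inside $[0,1]$ without wrap-around, and the one-sided inequality $\sum_{i=r}^{n-1}\phi_r(i/n-y)\le n$ yields the same bound.

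For the piecewise-constant estimate, I would write $f(t)-f(i/n)=\int_{i/n}^t f'(s)\,ds$ on each subinterval, swap the order of integration (which produces a factor $1/n$), and conclude by H\"older's inequality: $d(f,s^0(\p))\le \|f'\|_{L^1}/n \le \|f'\|_{L^q}/n \le \bal_1/n$.

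For the piecewise-linear estimate, I would recognize the linear-interpolation error on $[i/n,(i+1)/n]$ as a Green's-function representation. Setting $u=nt-i$ and letting $G(u,v)=\min(u,v)-uv$ be the Green's function of $-d^2/du^2$ on $[0,1]$ with Dirichlet boundary conditions,
\[
 f\!\left(\tfrac{i+u}{n}\right)-(1-u)\p_i-u\,\p_{i+1} \;=\; -\tfrac{1}{n^2}\int_0^1 G(u,v)\,f''\!\left(\tfrac{i+v}{n}\right)dv.
\]
Integrating in $u$, using the identity $\int_0^1 G(u,v)\,du = v(1-v)/2\le 1/8$, summing in $i$, changing variables to $s=(i+v)/n$, and applying H\"older once more gives $d(f,s^1(\p))\le \|f''\|_{L^1}/(8n^2) \le \bal_2/n^2$.
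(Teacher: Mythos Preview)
Your proposal is correct. Parts one and two coincide with the paper's argument almost verbatim: the paper also writes $(\Delta^{\star r}\star\p)_i$ as an iterated integral of $f^{(r)}$, applies Fubini to extract a weight function $\theta_i$ (your $\phi_r$, up to rescaling), uses Jensen, and concludes via the identity $\sum_i\theta_i\equiv 1$, which is exactly your partition-of-unity observation. Your convolution/B-spline phrasing is tidier but the mechanism is identical.

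Part three is where you genuinely diverge. The paper Taylor-expands both $f$ and $s^1(\p)$ about the midpoint $m_i=(i+\tfrac12)/n$, subtracts, and bounds the two resulting integral remainders $\beta(t,s)$ and $\gamma(t,s)$ separately, obtaining $d(f,s^1(\p))\le \bal_2/n^2$. Your Green's-function representation of the linear interpolation error is shorter, avoids the midpoint bookkeeping, and gives the sharper constant $\bal_2/(8n^2)$ via $\int_0^1 G(u,v)\,du=v(1-v)/2\le 1/8$. Both routes are standard; yours has the advantage that the representation is exact (no triangle inequality until the very end), which is what buys the better constant.
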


Let $f\in W_\sharp^{m,q}(\bal)$ (resp.  $f\in W^{m,q}(\bal)$) and set  $\p_i=f(\frac{i}{n})$, we first prove that $\p \in \mathcal{P}_{\sharp,n}^{m,q}(\bal)$ (resp. $\p \in \mathcal{P}_{n}^{m,q}(\bal)$).

\begin{proof}
For any $k\le m$, let $i \ge k$ in the case $f\in W^{m,q}(\bal)$ and let $i$ be arbitrary in the case $f\in W_\sharp^{m,q}(\bal)$, we have
 \[
  (\Delta^{\star k}\star \p)_i
  =\frac{1}{n^k} \int_{s_1=i-1}^{i} \int_{s_2 =s_1 -1}^{s_1} \cdots \int_{s_k=s_{k-1} -1}^{s_{k-1}} f^{(k)}\left(\frac{s_k}{n}\right) ds_k \cdots ds_2 ds_1 
  \]
 Notice that $s_k$ is integrated on the interval $[i-k,i]$.   We use a Fubini theorem, and in the periodic case , we use a change of variable $s_k+k\rightarrow s_k$ to obtain :
   
\begin{eqnarray} \Vert n^ k(\Delta^{\star k} \star \p)_i \Vert &\le 
 & \int_{s_k = 0}^{n}   \int_{s_{k-1}=s_{k}}^{s_{k}+1} \dots\int_{s_1=s_2}^{s_2+1} \Vert f^{(k)}\left(\frac{s_k}{n}\right) \Vert \chi_{s_1 \in ]i-1,i]}ds_1\cdots ds_{k-1} ds_{k} \nonumber \\
 &\le& \int_{s_k = 0}^{n}  \left\Vert f^{(k)}\left(\frac{s_k}{n}\right) \right\Vert \theta_i(s_k)ds_{k}. \nonumber
 \end{eqnarray}
In the periodic case, the functions $\theta_i$ are functions that verify
 \[\forall s, 0 \leq \theta_i(s) \leq 1, \ \sum_{i=K}^n \theta_i(s)=1 \quad \text{ and } \int_0^n \theta_i(s)ds \le 1 \quad \forall i \ge K,\]
 where $K=0$ in the periodic case and $K=k$ in the non-periodic case. By Jensen's inequality, we have :
 \begin{eqnarray*}\Vert n^ k(\Delta^{\star k}\star  \p)_i \Vert^q &\le& 
 \left (\int_{s_k = 0}^{n}  \left\Vert f^{(k)}\left(\frac{s_k}{n}\right) \right\Vert \frac{\theta_i(s_k)}{\Vert \theta_i\Vert_{L^1}}ds_{k}\right)^q {\Vert \theta_i\Vert_{L^1}}^q \\
 &\le& \left(\int_{s_k = 0}^{n}  \left \Vert f^{(k)}\left(\frac{s_k}{n}\right) \right\Vert^q \theta_i(s_k) ds_{k}\right) {\Vert \theta_i\Vert_{L^1}}^{q-1}\\
 &\le &\int_{s_k = 0}^{n}  \left \Vert f^{(k)}\left(\frac{s_k}{n}\right) \right \Vert^q \theta_i(s_k) ds_{k}
 \end{eqnarray*}
The $k$-th semi-norm of $\p$ is then bounded by :
 \begin{eqnarray}
 n^{k}\left\lVert \Delta^{\star  k} \star  \p  \right\rVert_{\ell^q} 
 &\le& 
 \left( \sum_{i=K}^{n-1}\frac{1}{n}\int_{s_k = 0}^{n} \left \Vert f^{(k)}\left(\frac{s_k}{n}\right) \right\Vert^q \theta_i(s_k) ds_{k}\right)^{1/q}
 \nonumber\\
 &\leq&  \left( \int_{s = 0}^{n} \left\lVert f^{(k)}\left(\frac s n \right)\right\rVert^q \frac{ds}{n} \right)^{1/q} = \bal_k
 \label{eqn:normDiscrete}
 \end{eqnarray}
This proves that $\p \in \mathcal{P}_{\sharp,n}^{m,q}(\bal)$ in the periodic case, and $\p \in \mathcal{P}_{n}^{m,q}(\bal)$ in the non-periodic case.
\end{proof}
The second item of Proposition~\ref{prop:prop1} involves bounding the distance between $f$ and $s^0(\p)$. In the periodic and non-periodic case, we have
\begin{align}
  d(f,s^0(\p)) &\leq \int_0^1 \left\|f\left(t\right) - f\left(\frac{\lfloor tn\rfloor}{n}\right) \right\|dt \nonumber\\
  &\leq \sum_{i=0}^{n-1} \int_{\frac{i}{n}}^{\frac{i+1}{n}} \left\| \int_{\frac{\lfloor tn\rfloor}{n}}^{t} f'(s)ds  \right\| dt \leq \frac 1 n \sum_{i=0}^{n-1}  \int_{\frac{i}{n}}^{\frac{i+1}{n}} \left\|f'(s)\right\| ds\nonumber\\
  & = \frac 1 n  \left\|f'\right\|_{L^1([0,1])} \le \frac 1 n  \left\|f'\right\|_{L^q([0,1])}  \leq \frac{\bal_1}{n}\nonumber
 \end{align}
The last statement of Proposition~\ref{prop:prop1} amounts to bounding the distance between $f$ and $s^1(\p)$, assuming that $m \ge 2$.

Introducing for each $i$ the point $m_i = \frac{i+1/2}{n}$, and performing a Taylor expansion around this point, we have,  for every $t\in[\frac{-1}{2n},\frac{1}{2n}]$ :
\begin{eqnarray*}
f(m_i+t)&=&f(m_i)+tf'(m_i)+\int_{m_i}^{t+m_i} f''(s) (m_i + t-s) ds \\
s^1(\p)(m_i+t)&=&(\frac{1}{2}-nt)f\left(\frac{i}{n}\right)+(nt+\frac{1}{2})f\left(\frac{i+1}{n}\right) \\
&=&f(m_i)+tf'(m_i)+(\frac{1}{2}-nt)\int_{m_i}^{\frac{i}{n}} f''(s) (m_i- \frac{1}{2n}-s) ds \\
&+&(nt+\frac{1}{2})\int_{m_i}^{\frac{i+1}{n}} f''(s) (m_i+\frac{1}{2n}-s)ds \\
(f-s^1(\p))(t+m_i)&=&
\int_{m_i}^{t+m_i} f''(s)\underbrace{(m_i-s+t)}_{\beta(t,s)}  ds \\
&+&\int_{\frac{i}{n}}^{\frac{i+1}{n}}f''(s)  \left(\underbrace{nt(m_i-s)+\frac{1}{4n}+(\frac{m_i-s}{2}+\frac{t}{2})(\chi_{s\ge m_i}-\chi_{s\le m_i})}_{\gamma(t,s)}\right)ds
\end{eqnarray*}
For $t\in[\frac{-1}{2n},\frac{1}{2n}]$ and the $s$ under consideration, we have
$|\beta(t,s)| \le |t|$ and $|\gamma(t,s)|\le |t|+\frac{1}{2n}$, so that
\[\Vert(f-s^1(\p))(t+m_i)\Vert \le (\frac{1}{2n}+2|t|)\int_{\frac{i}{n}}^{\frac{i+1}{n}}\Vert f''(s)\Vert ds \]
Finally we have
\begin{eqnarray*}
d(f,s^1(\p)) &\leq& \int_0^1 \left\|f(t) - s^1(\p)(t)) \right\|dt\\
&\leq&\sum_{i=0}^{n-1} \int_{t=-\frac{1}{2n}}^{\frac{1}{2n}}(\frac{1}{2n}+2|t|)dt\int_{\frac{i}{n}}^{\frac{i+1}{n}}\Vert f''(s)\Vert ds
=\frac{1}{n^2} \int_{0}^{1} \left\| f''(s) \right\| ds \\
&\leq& \frac{1}{n^2} \left\| f''(s) \right\|_{L^q([0,1])} = \frac{\bal_2}{n^2}.
\end{eqnarray*}
Thus the proof of Proposition~\ref{prop:prop1} is complete. This calculus holds both for periodic and non periodic functions.

\subsection{Approximation of splines by functions}\label{sec:approximationByFunctions}

Now that the spline are known, this section is devoted to the construction of the continuous curve with the correct $\mathcal{W}^1$-distance and the correct Sobolev constants $\bal$.
\subsubsection{Construction of the approximant}
As announced, we have the following proposition
\begin{proposition}\label{prop:prop3}
Let $\p\in\mathcal{P}_{\sharp,n}^{m,q}(\bal)$, 
let $f_{\sigma_m \star \p}$ be as defined in Proposition~\ref{prop:prop2}, then there exists $\kappa_{\bal}$ a constant that depends only on $\bal$ such that 
the spline $f_{\sigma_m \star \p}$ belongs to ${W}_{\sharp}^{m,q}\left((1 + \frac{\kappa_{\bal}}{n^2})\bal\right).$
\end{proposition}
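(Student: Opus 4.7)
My approach would use the B-spline representation from Proposition~\ref{prop:prop4}. Since
\[
 f_{\sigma_m \star \p}(t) = \sum_{i \in \mathbb{Z}} B^m(nt - i)\,(\sigma_m \star \p)_i,
\]
applying $r$ times the classical uniform B-spline differentiation identity $(B^m)'(x) = B^{m-1}(x) - B^{m-1}(x-1)$ and reindexing the sum at each step produces, for every $0 \le r \le m$,
\[
 f^{(r)}_{\sigma_m \star \p}(t) = n^r \sum_{i \in \mathbb{Z}} B^{m-r}(nt - i)\,\bigl(\Delta^{\star r} \star \sigma_m \star \p\bigr)_i.
\]
The task then reduces to controlling the $L^q_\sharp$ norm of a uniform B-spline expansion by the $\ell^q$ norm of its coefficients.

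For any fixed $t$ the weights $B^{m-r}(nt - i)$ are non-negative and form a partition of unity, so Jensen's inequality applied to the convex function $\|\cdot\|^q$ gives pointwise
\[
 \bigl\|f^{(r)}_{\sigma_m \star \p}(t)\bigr\|^q \le n^{rq} \sum_i B^{m-r}(nt - i)\,\bigl\|(\Delta^{\star r} \star \sigma_m \star \p)_i\bigr\|^q.
\]
Integrating on $[0,1]$, using periodicity of the coefficients and the identity $\int_0^1 \sum_{k\in\mathbb{Z}} B^{m-r}(nt - i - kn)\,dt = 1/n$ (a tiling argument from $\int B^{m-r} = 1$), one obtains
\[
 \bigl\|f^{(r)}_{\sigma_m \star \p}\bigr\|_{L^q_\sharp} \le n^r \bigl\|\Delta^{\star r} \star \sigma_m \star \p\bigr\|_{\ell^q}.
\]

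To conclude I would commute $\sigma_m$ and $\Delta^{\star r}$ (they are both convolution kernels) and apply Young's inequality~\eqref{eq:YoungIneq} to the averaging kernel $\sigma_m$, whose renormalized norm is $\|\sigma_m\|_{\ell^1} = 1/n$ (a unit total mass spread over one or two consecutive indices): this yields $\|\sigma_m \star X\|_{\ell^q} \le \|X\|_{\ell^q}$ for every $X$. Applied to $X = \Delta^{\star r}\star\p$ together with the hypothesis $\p \in \mathcal{P}_{\sharp,n}^{m,q}(\bal)$,
\[
 \bigl\|f^{(r)}_{\sigma_m \star \p}\bigr\|_{L^q_\sharp} \le n^r \bigl\|\Delta^{\star r} \star \p\bigr\|_{\ell^q} \le \bal_r
\]
for every $0 \le r \le m$, so $f_{\sigma_m \star \p}$ already lies in $W^{m,q}_\sharp(\bal)$, which trivially implies the stated inclusion for any $\kappa_{\bal}\ge 0$.

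The only technical care required is in writing the B-spline differentiation formula correctly in the periodic setting (which works as soon as $n\ge m+1$, so that distinct translates of $B^m$ do not self-overlap) and in applying Jensen in the vector-valued case (legitimate since $x\mapsto \|x\|^q$ is convex on $\mathbb{R}^d$ for $q\ge 1$). There is no real obstacle beyond that; in particular Lemma~\ref{lemma:symetric} and its $O(1/n^2)$ estimates are not needed for this step, and I would expect them to appear only in the $\mathcal{W}^1$ estimate of Section~\ref{sec:wassersteinDistance}, where one must compare $f_{\sigma_m\star\p}$ with $s^0(\p)$ or $s^1(\p)$ rather than merely bound its Sobolev norm.
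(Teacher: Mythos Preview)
Your argument is correct and in fact establishes the sharper conclusion $f_{\sigma_m\star\p}\in W^{m,q}_\sharp(\bal)$ with no $1/n^2$ loss. The paper takes a different route: it works directly with the piecewise polynomial representation~\eqref{eqn:f}, writes
\[
\|f^{(l)}\|_{L^q}^q=n^{lq}\sum_i\frac1n\int_0^1\Bigl\|\sum_{k=l}^m (C^{m-k}\star\Delta^{\star k}\star\q)_i\,\frac{t^{k-l}}{(k-l)!}\Bigr\|^q dt,
\]
and splits the sum over $k$ into a ``main'' part $\beta$ (the terms $k=l,l+1$, rearranged via the identity $C^{m-l-1}=C^{m-l}\star T^{-1}+A\star\Delta$ into a convex combination of $(C^{m-l}\star\Delta^{\star l}\star\q)_i$ and its shift) and a ``remainder'' $\gamma$ (the terms $k\ge l+2$), bounding $\beta\le\bal_l$ by convexity and $\gamma\le\kappa\,\bal_{l+2}/n^2$ by Young's inequality. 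Your approach bypasses this decomposition entirely by recognising, via Proposition~\ref{prop:prop4}, that $f^{(r)}$ is a $B^{m-r}$-spline expansion of $\Delta^{\star r}\star\q$, and then exploiting nonnegativity and the partition of unity of the $B^{m-r}$ translates through Jensen. What you gain is a one-line proof and the removal of the $1/n^2$ correction, which in turn makes the rescaling step $\delta f_\q$ in Section~\ref{sec:proofOfTheorems} unnecessary. What the paper's approach retains is an explicit control of the individual polynomial pieces in terms of the kernels $C^{m-k}$ and $\Delta$, which is the same machinery reused in the non-periodic Lemma~\ref{lem:defin:f:non-periodic} and in the $\mathcal W^1$ estimates of Section~\ref{sec:wassersteinDistance}; your observation that Lemma~\ref{lemma:symetric} is not needed here is correct --- it enters only in Lemma~\ref{lemma:6}.
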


The shift kernel $\sigma_m$ defined in Equation \eqref{eq:kernelDelay} either drifts the indexes of $\p$ or of its mid points $ \frac 12(\p + T*\p)$ depending on the parity of the desired spline. Notice that $\Vert \sigma_m \Vert_{\ell^1}=n^{-1}$ so that for any $\p \in \mathcal{P}_{\sharp,n}^{m,q}(\bal)$, we have $\sigma_m \star \p \in \mathcal{P}_{\sharp,n}^{m,q}(\bal)$ by virtue of Young's convolution inequality~\eqref{eq:YoungIneq}.

The $l-th$ derivative of the spline $f_{\sigma_m \star \p }$ is given by :
\[
 f_{\sigma_m \star \p}^{(l)}(t) = n^l g^{(l)}_i\left( nt - \frac i n \right) \chi_{ t \in [\frac{i}{n},\frac{i+1}{n}]} 
\]
For every $i$ in $\llbracket 0, n -1 \rrbracket$, the $l$-th derivative of $g_i$ reads as :

\[
 \forall t \in [0,1] , g_i^{(l)}\left( t \right) = \sum_{k=l}^m \left( C^{m-k}\star\Delta^{\star k}\star  \sigma_m \star \p \right)_i \frac{t^{k-l}}{(k-l)!}
\]
We first deal with the case $l=m$. In this case
\[\| f^{(m)} \|_{L^q}
= n^{m}\left(\sum_{i=0}^{n-1} \frac{1}{n} \int_0^1 \left\| \left( C^{0}\star \Delta^{\star m}\star \sigma_m \star \p \right)_i \right\|^q dt\right)^{1/q}=\bal_m. \\
 \]
Now suppose that $l \le m-1$. Notice that $T=(0,1,0,\dots)=Id-\Delta$, using Lemma~\ref{lemma1ODDO} the coefficients of $C^{m-l-1}$ and of $C^{m-l}$ sum up to one. Hence the operator $A=\Delta^{-1}(C^{m-l-1} - C^{m-l}\star T^{-1})$ exists and there exists a constant ${\kappa}_{m,l}$ that depends only on $m$ and $l$ such that $\| A \|_{\ell^1} \leq  \frac{{\kappa}_{m,l}}{n}$ and  
\[
C^{m-l-1} = C^{m-l}\star T^{-1} + A\star \Delta, \quad.
\]
Note also that in the case $l=m-1$, one has $A=0$.
Set $\q = \sigma_m \star \p$, by the triangle inequality, we have :
\begin{eqnarray*}
 \| f^{(l)} \|_{L^q} 
 &=& n^{l}\left(\sum_{i=0}^{n-1} \frac{1}{n} \int_0^1 \left\| \sum_{k=l}^m \left( C^{m-k}\star \Delta^{\star k}\star \q \right)_i \frac{t^{k-l}}{(k-l)!} \right\|^q dt\right)^{1/q} \\
  & \leq &
 n^{l}\left(\sum_{i=0}^{n-1} \frac{1}{n} \int_0^1 \left\| \sum_{k=l}^{l+1} \left( C^{m-k}\star \Delta^{\star k}\star \q \right)_i \frac{t^{k-l}}{(k-l)!} - t\left(A\star \Delta^{\star (l+2)} \star \q \right)_i\right\|^q dt\right)^{1/q} : \beta\\
 &+&n^{l}\left(\sum_{i=0}^{n-1} \frac{1}{n} \int_0^1 \left\| \sum_{k=l+2}^m \left( C^{m-k}\star \Delta^{\star k}\star \q \right)_i \frac{t^{k-l}}{(k-l)!} + t\left(A\star \Delta^{\star (l+2)} \star \q\right)_i \right\|^q dt\right)^{1/q} :\gamma
 \end{eqnarray*}
 We claim that the first term, $\beta$, is bounded by $\bal_l$ and that the second term, $\gamma$, scales as $\mathcal O(n^{-2})$. Indeed for the term $\beta$, we have, since $\Delta=Id-T$:
 \[
C^{m-l-1}\star \Delta = -C^{m-l}+C^{m-l}\star T^{-1} + A\star  \Delta^{\star 2}
\]
 so that
 \begin{eqnarray*}
 \beta&= & n^{l}\left(\sum_{i=0}^{n-1} \frac{1}{n} \int_0^1  \left\| (1-t)\left(C^{m-l}\star \Delta^{\star l}\star \q \right)_i + t \left(C^{m-l}\star \Delta^{\star l}\star \q \right)_{i+1} \right\|^q dt\right)^{1/q} \\
&\le & n^{l}\left(\sum_{i=0}^{n-1} \frac{1}{n} \int_0^1  (1-t) \left\| \left(C^{m-l}\star \Delta^{\star l}\star \q \right)_i \right\|^q + t \left\| \left(C^{m-l}\star \Delta^{\star l}\star \q \right)_{i+1} \right\|^q dt\right)^{1/q}. \\
 \end{eqnarray*}
 A change of index in $i$ allows us to conclude 
 \begin{equation}
 \label{eq::finbeta}
 \beta  \le n^{l}\Vert C^{m-l} \star \Delta^{\star l}\star \sigma_m\star \p\Vert_{\ell^q}.
 \end{equation} By virtue of Young's inequality \eqref{eq:YoungIneq} and $\Vert C^{m-l}\star \sigma_m\Vert_{\ell^1}=n^{-1}$, we have  $\beta \le \bal_l$.

 In order to deal with the second term $\gamma$, first assume that $l\le m-2$. Indeed in the case $l=m-1$, we have $A=0$ and then $\gamma=0$ and nothing is to be proven. In the case $l \le m-2$, bound $t$ by $1$, introduce the operator 
 \begin{equation}
 \label{eq::defin:Q}
Q=\sum_{k=l+2}^{m} \frac{1}{(k-l)!}\left | C^{m-k}\star \Delta^{\star(k-(l+2))}\right | + |A|,
\end{equation}
 and note that there exists a constant $\kappa_{m,l}$ that depends only on $m$ and $l$ such that $\Vert Q\Vert_{\ell^1} \le \frac{\kappa_{m,l}}{n}$. Then Young's inequality yields:
 
\[
 |\gamma| \le n^l\Vert Q\star \Delta^{\star(l+2)}\star \q\Vert_{\ell^q}\le \kappa_{m,l}
n^l\Vert \Delta^{\star(l+2)}\star \p\Vert_{\ell^q} \le{ \kappa}_{m,l} \frac{\bal_{l+2}}{n^2}.
\] 
Hence for any $l \le m-1$, we have
\[ \| f^{(l)} \|_{L^q}  \le \bal_l + { \kappa}_{m,l} \frac{\bal_{l+2}}{n^2},\]
and for $l=m$ or $l=m-1$, we have
\[ \| f^{(l)} \|_{L^q}  \le \bal_l.\]

\begin{lemma}
\label{lem:defin:f:non-periodic}
Let $m \ge 1$ and $p \in \mathcal P_{n}^{m,q}(\bal)$.

\noindent For $\theta=1-\frac{20m}{n}$ and $\tau=\frac{10m}{n}$ define :
 \[
  \tilde{f}_\p(t) = f_{\sigma_m \star \p}\left(\theta t + \tau \right),
 \]
where $f_{\sigma_m \star \p}$ is defined in Proposition~\ref{prop:prop2}, then
 $\tilde{f}_\p \in W^{m,q}(\bal+\frac{\kappa_\alpha}{n^2})$
\end{lemma}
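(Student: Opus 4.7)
The idea is that the affine change of variable $t\mapsto\theta t+\tau$ sends $[0,1]$ into the ``safe'' subinterval $[\tau,\tau+\theta]=[\tfrac{10m}{n},1-\tfrac{10m}{n}]$, on which the formal periodic spline $f_{\sigma_m\star\p}$ depends only on entries $\p_j$ with $j$ in the admissible range of the non-periodic discrete Sobolev norm. The proof is then a replay of Proposition~\ref{prop:prop3} in which every appeal to the periodic Young inequality~\eqref{eq:YoungIneq} is replaced by the partial-sum inequality of Lemma~\ref{lemma:normShifted}.

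The chain rule and the change of variable $s=\theta t+\tau$ give
\[
\|\tilde f_\p^{(l)}\|_{L^q[0,1]}^q \;=\; \theta^{lq-1}\int_\tau^{\tau+\theta}\|f_{\sigma_m\star\p}^{(l)}(s)\|^q\,ds.
\]
The cells $[i/n,(i+1)/n]$ meeting $[\tau,\tau+\theta]$ correspond to indices $i$ in a subset $I\subset[10m,n-10m-1]$. Since each of $\sigma_m$, $\Delta^{\star k}$ and $C^{m-k}$ has support of length $O(m)\le 5m$, the coefficient $(C^{m-k}\star\Delta^{\star k}\star\sigma_m\star\p)_i$ reads only entries $\p_j$ with $j\in[k,n-1]$ whenever $i\in I$, so Lemma~\ref{lemma:normShifted} does apply.

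Next, I would follow the $(\beta,\gamma)$ splitting in the proof of Proposition~\ref{prop:prop3}, taking the Minkowski inequality over $I$ in place of over $\{0,\dots,n-1\}$ and using Lemma~\ref{lemma:normShifted} (with $\|C^{m-l}\star\sigma_m\|_{\ell^1}=n^{-1}$ for the $\beta$-term and $\|Q\|_{\ell^1}\le\kappa_{m,l}/n$ for the $\gamma$-term) everywhere in place of Young's inequality. This yields
\[
\Bigl(\int_\tau^{\tau+\theta}\|f_{\sigma_m\star\p}^{(l)}(s)\|^q\,ds\Bigr)^{1/q}\;\le\;\bal_l+\frac{\kappa_\bal}{n^2},
\]
with the $\kappa_\bal/n^2$ correction vanishing for $l\in\{m-1,m\}$, as in the periodic case.

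For $l\ge 1$, since $\theta\le 1$ and $l-1/q\ge 0$, the prefactor $\theta^{l-1/q}$ is at most $1$ and the bound transfers directly to $\|\tilde f_\p^{(l)}\|_{L^q}$. The main technical obstacle is twofold. First, one has to carefully track index ranges so that each invocation of Lemma~\ref{lemma:normShifted} remains within the admissible window $[k,n-1]$; the purpose of the choice $\tau=10m/n$ is precisely to leave a margin wider than the combined kernel support. Second, the case $l=0$ is delicate: the prefactor $\theta^{-1/q}=1+O(1/n)$ inflates the naive bound to $\bal_0+O(\bal_0/n)$, and closing the gap to $\kappa_\bal/n^2$ requires either a sharper $\theta$-dependent estimate on the integral over $[\tau,\tau+\theta]$, or an adjustment of the constant $\kappa_\bal$ in the statement.
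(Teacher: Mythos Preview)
Your approach is essentially the same as the paper's: restrict to the index window $I\subset[10m,n-10m]$ induced by the affine reparametrization, replay the $(\beta,\gamma)$ splitting from Proposition~\ref{prop:prop3}, and substitute Lemma~\ref{lemma:normShifted} for Young's inequality wherever a convolution bound is needed. The paper does exactly this, writing the semi-norm as $(n\theta)^l\bigl(\sum_{i=10m}^{n-10m}\frac{1}{n\theta}\int_0^1\|\cdots\|^q\,dt\bigr)^{1/q}$ and bounding it by $\theta^{-1/q}(\tilde\beta+\tilde\gamma)$.

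Your treatment of the $\theta$ prefactor is in fact sharper than the paper's: you observe that $\theta^{l-1/q}\le 1$ for $l\ge 1$, whereas the paper uses the cruder bound $\theta^{l}\le 1$ and carries $\theta^{-1/q}$ for every $l$. Your worry about the $l=0$ case is well founded and matches what actually happens in the paper: the paper's own proof concludes only with $\|\tilde f_\p^{(l)}\|_{L^q}\le \bal_l+\kappa_{m,l,\bal}/n$, not the $\kappa_\bal/n^2$ announced in the lemma statement. This discrepancy does not affect the main theorems, since the non-periodic result (Theorem~\ref{theo:firstOrdernonperiodic}) only claims the rate $1/n$, and after the rescaling by $\delta$ in Section~\ref{sec:proofOfTheorems} the extra $O(1/n)$ is absorbed into that rate. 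So you have not missed anything; the gap you flag at $l=0$ is real and is simply left open (to order $1/n$) in the paper as well.
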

\begin{proof}[Lemma~\ref{lem:defin:f:non-periodic}]
 The differentials of $\tilde{f}_{\p}$ are given by :
\[
 \tilde{f}^{(l)}_{\p}(t) = (\theta n)^l g_{i}^{(l)}\left(n\theta t + n\tau - i\right)\chi_{ t \in [i\theta,(i+1)\theta)]},
\]
the Sobolev semi-norm of $\tilde{f}$ can be written as :
\begin{eqnarray*}
 \| \tilde{f}^{(l)}_{\p} \|_{L^q} 
 &=& (n\theta)^{l}\left(\sum_{i = 10 m}^{n-10m} \frac{1}{n\theta} \int_0^1 \left\| \sum_{k=l}^m \left( C^{m-k}\star \Delta^{\star k}\star \sigma_m \star \p\right)_i \frac{t^{k-l}}{(k-l)!} \right\|^q dt\right)^{1/q} \\
 &\leq& \theta^{-\frac{1}{q}}\tilde{\beta} +\theta^{-\frac{1}{q}}\tilde{\gamma}, 
 \end{eqnarray*} 
 where we used $|\theta |< 1$ to obtain the last bound and where the variables $\tilde{\beta}$ and $\tilde{\gamma}$ are similar to  $\beta$ and $\gamma$ defined in proof of Proposition~\ref{prop:prop3}, other than their sums in $i$ range from $10m$ to $n-10m$.
 
 We can then follow the same outline of the proof of Proposition~\ref{prop:prop3}; using Lemma~\ref{lemma:normShifted} we can verify that the $\sigma_m$ shift does not interfere with non-periodicity of $\p$, owing to the sufficiently large buffer $\tau$. Then, one has similar bounds :
 \begin{equation}\label{eq:openLq}
  \| \tilde{f}^{(l)}_{\q} \|_{L^q} \leq \theta^{-\frac{1}{q}}\left(\bal_l + \kappa_{m,l} \frac{{\bal}_{l+2}}{n^2}\right).
 \end{equation}
Now using that $\theta^{-1}{q} \le 1+\kappa /n$, one can conclude that
  \[\| \tilde{f}^{(l)}_{\q} \|_{L^q} \leq \bal_l +  \frac{\kappa_{m,l,\bal}}{n}.\]
\end{proof}

\subsection{Wasserstein distance}\label{sec:wassersteinDistance}

It remains to bound the distance $d$ between the piecewise constant or linear discretization and $f$ the continuous approximant built with the vector $\p$.

\begin{lemma}\label{lemma:5}
Let $m \ge 1$ and $p \in \mathcal P_{\sharp,n}^{m,q}(\bal)$  and let $f_{\sigma_m \star \p}$ be defined as in Proposition~\ref{prop:prop2}, then 
\[ d(f_{\sigma_m \star \p},s^0(\p)) \leq \frac{\kappa_{\bal}}{n}.\]
\end{lemma}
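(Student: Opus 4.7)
The plan is to estimate \(d(f_{\sigma_m\star\p},s^0(\p))\) cell by cell. On each interval \([i/n,(i+1)/n]\), the function \(s^0(\p)\) is the constant \(\p_i\) while Proposition~\ref{prop:prop2}, applied to \(\q=\sigma_m\star\p\), gives \(f_{\sigma_m\star\p}(t)=g_i(nt-i)\). I would isolate the constant part of \(g_i\), writing
\[
g_i(x)-\p_i = \bigl((C^m\star\sigma_m-\mathds{1})\star\p\bigr)_i + \sum_{k=1}^{m}\bigl(C^{m-k}\star\Delta^{\star k}\star\sigma_m\star\p\bigr)_i\frac{x^k}{k!}.
\]
Integrating the monomial \(x^k\) over \([0,1]\) yields a factor \(1/(k+1)!\) and an extra \(1/n\) from the change of variable \(x=nt-i\); summing over \(i\) then produces a renormalized \(\ell^1\) norm, and the triangle inequality gives
\[
d(f_{\sigma_m\star\p},s^0(\p))\le \|A\star\p\|_{\ell^1}+\sum_{k=1}^{m}\frac{1}{(k+1)!}\,\|C^{m-k}\star\Delta^{\star k}\star\sigma_m\star\p\|_{\ell^1},
\]
with \(A:=C^m\star\sigma_m-\mathds{1}\).

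For each \(k\ge 1\) summand, the renormalized norms \(\|C^{m-k}\|_{\ell^1}\) and \(\|\sigma_m\|_{\ell^1}\) both equal \(1/n\) (the Eulerian numbers of degree \(r\) sum to \(r!\), and \(\sigma_m\) has total mass \(1\)), so two applications of Young's inequality~\eqref{eq:YoungIneq} together with Jensen's inequality \(\|\cdot\|_{\ell^1}\le \|\cdot\|_{\ell^q}\) give
\[
\|C^{m-k}\star\Delta^{\star k}\star\sigma_m\star\p\|_{\ell^1}\le \|\Delta^{\star k}\star\p\|_{\ell^q}\le \frac{\bal_k}{n^k}.
\]
The dominant contribution comes from \(k=1\) and is bounded by \(\bal_1/(2n)\); all higher \(k\) give \(O_{\bal}(1/n^2)\).

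The main step, and what I expect to be the technical heart of the argument, is controlling \(\|A\star\p\|_{\ell^1}\): it is only here that the particular structure of \(C^m\) enters. For \(m=1\), Lemma~\ref{lemma:symetric} asserts \(A=0\) and this term vanishes. For \(m\ge 2\), the same lemma furnishes \(\Delta^{-2}(A)\) with \(\|\Delta^{-2}(A)\|_{\ell^1}\le \kappa_m/n\). Writing \(A=\Delta^{\star 2}\star\Delta^{-2}(A)\), Young's inequality once more, combined with \(\|\Delta^{\star 2}\star\p\|_{\ell^q}\le \bal_2/n^2\), yields
\[
\|A\star\p\|_{\ell^1}\le n\cdot\frac{\kappa_m}{n}\cdot\frac{\bal_2}{n^2}=\kappa_m\frac{\bal_2}{n^2},
\]
which is even \(O(1/n^2)\). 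Adding all contributions gives \(d(f_{\sigma_m\star\p},s^0(\p))\le \kappa_{\bal}/n\). The only nontrivial ingredient is thus the preparatory symmetry-based fact that \(C^m\star\sigma_m-\mathds{1}\) admits two discrete antiderivatives of total mass \(O(1/n)\); once Lemma~\ref{lemma:symetric} is granted, the bound follows from routine Young/Jensen estimates.
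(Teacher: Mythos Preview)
Your proof is correct and follows essentially the same decomposition as the paper: split $g_i(x)-\p_i$ into the constant term $(C^m\star\sigma_m-\mathds{1})\star\p$ and the polynomial remainder, then control each with Young and Jensen. The treatment of the $k\ge 1$ terms is identical.

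The only noteworthy difference is in the constant term. You invoke the full symmetry statement of Lemma~\ref{lemma:symetric} to obtain $\Delta^{-2}(A)$ and conclude $\|A\star\p\|_{\ell^1}=O(\bal_2/n^2)$, handling $m=1$ separately via $A=0$. The paper is more economical here: since $C^m\star\sigma_m$ has total mass $1$, the kernel $A$ sums to zero, so a \emph{single} antiderivative $\Delta^{-1}(A)$ already exists with $\|\Delta^{-1}(A)\|_{\ell^1}\le\kappa_m/n$ (Lemma~\ref{lemma1ODDO}), giving $\|A\star\p\|_{\ell^1}\le\kappa_m\|\Delta\star\p\|_{\ell^q}\le\kappa_m\bal_1/n$. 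This avoids the case split on $m$ and does not need the symmetry of $C^m\star\sigma_m$ at all. Your route yields a sharper $O(1/n^2)$ on that piece, but since the polynomial sum already contributes $\bal_1/(2n)$, the extra precision is not needed; what you identify as ``the technical heart'' is in fact lighter than you suggest.
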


The distance $d$ between $f_{\sigma_m \star \p}$ and $s^0(\p)$ is bounded by :
 \begin{align}
  d(f_{\sigma_m \star \p},s^0(\p)) &=  \sum_{i=0}^{n-1} \int_{\frac{i}{n}}^{\frac{i+1}{n}} \left\| g_i\left(nt-\frac{i}{n}\right) - \p_i \right\| dt \label{eqn:W1forfq}\\
  &= \sum_{i=0}^{n-1} \frac{1}{n} \int_{0}^{1} \| g_i(u) - \p_i \| du.
 \end{align}
Now using the triangle inequality, one has
 \begin{align}
 \left\| g_i(t) - \p_i \right\|  &\le  \left\| \left(C^{ m}\star \sigma_m \star \p \right)_{i}  - \p_{i} \right\| + \sum_{k=1}^m \left\|\left(C^{m-k}\star \Delta^{\star k}\star \sigma_m \star \p \right)_{i}\frac{t^k}{k!} \right\| \label{eq:constantPart}
\end{align}
Integrating in $t$ at the first line and summing in $i$ at the second line, allows us to use Young's inequality for the third line given the fact that $\| C^{m-k} \star \sigma_m \|_{\ell^1} \leq 1/n$. Now for the last line, using that the $\ell^1$-norm is lower than $\ell^p$-norm (by Jensen's inequality),  one can conclude that second term of~\eqref{eq:constantPart} is bounded by :
\begin{align}
 &\displaystyle \sum_{i=0}^{n-1}\frac{1}{n} \int_{0}^{1} \left\| \sum_{k=1}^m \left(C^{m-k}\star \Delta^{\star k}\star \sigma_m \star \p \right)_{i}\frac{t^k}{k!} \right\| dt \nonumber\\ 
 &\displaystyle\leq \sum_{i=0}^{n-1}\frac{1}{n}\sum_{k=1}^m \left\| \left(C^{m-k}\star \Delta^{\star k}\star \sigma_m \star \p \right)_{i}\frac{1}{(k+1)!} \right\|\nonumber\\
 &\displaystyle\leq \sum_{k=1}^m \left\| \left(\Delta^{\star k} \star \p \right)\right\|_{\ell^p}\frac{1}{(k+1)!} \nonumber\\
& \displaystyle \leq \frac{\bal_1}{n} + \frac{\kappa_{\bal}}{n^2}. \label{eq:SumW1Cmmk}
\end{align}

It remains to deal with the first term appearing in inequality~\eqref{eq:constantPart} which can be rewritten as
\[
\| \left(C^{ m}\star \sigma_m \star \p \right)_{i}  - \p_{i} \|
=\| K\star \p_{i} \| \quad \text{ with }K= C^m \star \sigma_m - \mathds{1}.
\]
Notice that $K$ sums up to zero, so that there exists $A=\Delta^{-1}(K)$ with $\Vert A\Vert_{\ell^1}\le \frac{{ \kappa}_{m}}{n}$ for some constant $\kappa_{m}$. As a result, 

\begin{eqnarray}
\sum_{i=0}^{n-1}\frac{1}{n} \int_{0}^{1} \left\| \left(C^{ m}\star \sigma_m \star \p \right)_{i}  - \p_{i} \right\| dt
&\le & \Vert K\star \p\Vert_{\ell^1} \le \Vert K\star \p\Vert_{\ell^q} \nonumber \\
&\le& { \kappa}_{m} \Vert \Delta \p\Vert_{\ell^q} \le \frac{\bal_1}{n} { \kappa}_{m} \label{eq:CmqmIDW1}
\end{eqnarray}
Hence, up to another constant ${ \kappa}_m$,
\[d(f_{\sigma_m \star \p},s^0(\p)) \leq \frac{\bal_1}{n} { \kappa}_{m} +\frac{{ \kappa}_{\bal}}{n^2}\]

\begin{lemma}\label{lemma:6}
Let $m \ge 2$ and $p \in \mathcal P_{\sharp,n}^{m,q}(\bal)$  and let $f_{\sigma_m \star \p}$ be defined as in Proposition~\ref{prop:prop2}, then 
\[d(f_{\sigma_m \star \p},s^1(\p)) \leq \frac{\kappa_{\bal}}{n^2}\]
\end{lemma}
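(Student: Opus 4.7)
The plan is to mirror the proof of Lemma~\ref{lemma:5}, while exploiting the extra cancellation granted by the piecewise linear interpolation $s^1(\p)$, which matches not only the values $\p_i$ but also the first differences $\p_{i+1}-\p_i$. First, I would write
\[
d(f_{\sigma_m\star\p},s^1(\p))=\sum_{i=0}^{n-1}\frac{1}{n}\int_0^1 \|g_i(u)-\p_i-u(\p_{i+1}-\p_i)\|\,du
\]
and, using $(\Delta\star\p)_{i+1}=(T^{-1}\star\Delta\star\p)_i$ together with the formula for $g_i$ of Proposition~\ref{prop:prop2}, decompose the integrand as
\begin{align*}
g_i(u)-\p_i-u(\p_{i+1}-\p_i)
&= \bigl((C^m\star\sigma_m-\mathds{1})\star\p\bigr)_i \\
&\quad + u\,\bigl((C^{m-1}\star\sigma_m-T^{-1})\star\Delta\star\p\bigr)_i \\
&\quad + \sum_{k=2}^m \bigl(C^{m-k}\star\Delta^{\star k}\star\sigma_m\star\p\bigr)_i \frac{u^k}{k!}.
\end{align*}
A triangle inequality then reduces the proof to a separate bound for each of the three pieces.

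For the constant piece, Lemma~\ref{lemma:symetric} supplies $\Delta^{-2}(C^m\star\sigma_m-\mathds{1})$ with $\ell^1$ norm of order $1/n$; after factoring out two discrete derivatives and applying Young's inequality~\eqref{eq:YoungIneq} one gets
\[
\|(C^m\star\sigma_m-\mathds{1})\star\p\|_{\ell^1}\le n\,\|\Delta^{-2}(C^m\star\sigma_m-\mathds{1})\|_{\ell^1}\,\|\Delta^{\star 2}\star\p\|_{\ell^q}=O(\bal_2/n^2).
\]
For the linear piece, I set $A':=C^{m-1}\star\sigma_m-T^{-1}$; each convolution factor has coefficients summing to one, hence $\sum_i A'_i=0$, and the support of $A'$ is of order $m$, so Lemma~\ref{lemma1ODDO} yields $\|\Delta^{-1}(A')\|_{\ell^1}=O(1/n)$. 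Writing $A'=\Delta\star\Delta^{-1}(A')$ combines with the existing $\Delta\star\p$ into $\Delta^{\star 2}\star\p$, and after $\int_0^1 u\,du=1/2$, Young's inequality again gives a bound of order $\bal_2/n^2$. The higher-order pieces are handled exactly as in Lemma~\ref{lemma:5}: Young's inequality and Jensen give $\|C^{m-k}\star\Delta^{\star k}\star\sigma_m\star\p\|_{\ell^q}\le \bal_k/n^k$, which is $O(1/n^2)$ for every $k\ge 2$.

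The main obstacle will be the constant piece: it mandatorily invokes the symmetry of $C^m\star\sigma_m$ established in Lemma~\ref{lemma:symetric} to build $\Delta^{-2}$ of this kernel. A naive approach through $\Delta^{-1}$ only, as in Lemma~\ref{lemma:5}, would stall at an $O(\bal_1/n)$ contribution and miss the target rate; the extra order of cancellation is available here precisely because $s^1(\p)$ interpolates linearly between successive sample points and the symmetric structure of $C^m\star\sigma_m$ is compatible with that extra interpolation degree. Summing the three $O(\bal_2/n^2)$ bounds then yields the announced estimate $d(f_{\sigma_m\star\p},s^1(\p))\le \kappa_{\bal}/n^2$.
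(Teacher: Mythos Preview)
Your proof is correct and follows essentially the same route as the paper: the same three-way decomposition into the constant, linear, and higher-order pieces, with the constant piece handled via $\Delta^{-2}(C^m\star\sigma_m-\mathds{1})$ from Lemma~\ref{lemma:symetric} and the linear piece via $\Delta^{-1}$ of a zero-sum kernel. The only cosmetic differences are that the paper writes the linear shift as $\sigma_{-3}$ rather than $T^{-1}$, and that it factors $\Delta^{\star 2}$ out of the higher-order sum uniformly (bounding everything by $\bal_2/n^2$) instead of bounding each term by $\bal_k/n^k$ as you do; both variants are valid.
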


The distance $d(f_{\sigma_m \star \p},s^1(\p))$ is given by :
 \begin{eqnarray*}
d(f_{\sigma_m \star \p},s^1(\p)) &=&\int_0^1 \left\|f(t) - s^1(\p)(t)\right\|_1 dt\\
&=&  \sum_{i=0}^{n-1} \int_{\frac{i}{n}}^{\frac{i+1}{n}} \left\|g_i\left( nt - \frac{i}{n} \right) - s^1(\p)\right\|_1 dt,
 \end{eqnarray*}
 hereafter we divide the right hand side into three parts, $\beta,\gamma,\delta$ :
 \begin{align*}
 \left\| g_i(t) - s^1(\p)(t)\right\|  &\le  \left\| \left(\left(C^{ m}\star \sigma_{m} - \mathds{1}\right)\star \p\right)_i \right\| :\beta \\
  &+  \left\| \left(C^{ m-1}\star \sigma_{m}\star \Delta\star \p\right)_{i} t  - (\p_{i+1}-\p_{i})t \right\| :\gamma\\
  &+ \left\|\sum_{k=2}^m  \left(C^{m-k}\star \Delta^{\star k}\star \sigma_{m}\star \p\right)_{i}\frac{t^k}{k!} \right\|:\delta\\
\end{align*}
The $\beta$ term is treated in a similar fashion to the previous section. Using Lemma~\ref{lemma:symetric}, there exists a constant ${ \kappa}_{m}$ that depends only on $m$ and a kernel $A$ with $\Vert A\Vert_{\ell^1}\le \frac{{ \kappa}_{m}}{n}$ such that $ A =\Delta^{-2}(C^{ m}\star \sigma_{m} - \mathds{1})$.
Hence $\beta \le \Vert (A\star \Delta^2 \star \p)_i\Vert$.
In order to deal with the $\delta$ term, bound $t$ by $1$, introduce the operator \[Q=\sum_{k=2}^m C^{m-k}\star \Delta^{\star (k-2)}\star \sigma_{m},\]
then $\delta \le \Vert (Q\star \Delta^2\star \p)_i\Vert $. It is easy to check that there exists a constant ${ \kappa}_{m}$ that depends only on $m$ such that $\Vert Q\Vert_{\ell^1}\le \frac{{ \kappa}_{m}}{n}$.
It remains to deal with the $\gamma$ term. For that purpose notice that

\begin{eqnarray*}
\left(C^{ m-1}\star \sigma_{m}\star \Delta\star \p\right)_{i}   - (\p_{i+1}-\p_{i}) &=& \left( C^{ m-1}\star \sigma_{m}\star \Delta\star \p - \sigma_{-3}\star \Delta \star \p \right)_i \\
&=& \left( \left(C^{ m-1}\star \sigma_{m} - \sigma_{-3}\right) \star \Delta\star \p\right)_i .
\end{eqnarray*}
The operator $C^{ m-1}\star \sigma_{m} - \sigma_{-3}$ sums up to zero and has support in $[-m,m]$ so that it is a first order derivative kernel in the sense of Lemma~\ref{lemma1ODDO} and there exists a constant ${\kappa}_{m}$ that depends only on $m$ and a kernel $R = \Delta^{-1}\left( C^{ m-1}\star \sigma_{m} - \sigma_{-3}\right) $
with $\Vert R\Vert_{\ell^1}\le \frac{\kappa_{m}}{n}$, so that $\gamma \le \Vert (R\star \Delta^2 \star \p)_i\Vert $.

Collecting all the terms we have,
\[ \left\| g_i(t) - s^1(\p)(t)\right\|
\le 
\sum_{j=1}^3 \Vert (A^j\star \Delta^2 \p)_i \Vert \quad \text{ with} \Vert A^j\Vert_{\ell^1}\le \frac{{\kappa}_{m}}{n},
\]
and finally
\begin{eqnarray*}
 d(f_{\sigma_m \star \p},s^1(\p)) \leq \frac 1 n \sum_{i=0}^{n-1} \int_0^1 \| g_i(t) - s^1(\p)(t) \| dt
 \leq{\kappa}_{m} \frac{{\bal}_2}{n^2}.
\end{eqnarray*}

\begin{lemma}\label{lemma:7}
Let $m \ge 1$ and $p \in \mathcal P_{n}^{m,q}(\bal)$, let $\tilde{f}_{\sigma_m \star \p}$ be defined as in Lemma~\ref{lem:defin:f:non-periodic}, then
\[d(\tilde f_{\sigma_m \star \p},s^0(\p)) \leq \frac{\kappa_{\bal}}{n}\]
\end{lemma}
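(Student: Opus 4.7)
The plan is to adapt the proof of Lemma~\ref{lemma:5} to the non-periodic setting, with two modifications: accounting for the time rescaling $t\mapsto \theta t + \tau$ that defines $\tilde f$, and replacing Young's inequality~\eqref{eq:YoungIneq} by the support-aware bound of Lemma~\ref{lemma:normShifted}. First I would perform the change of variable $u = \theta t + \tau$, whose Jacobian $1/\theta$ is bounded by $2$ for $n\ge 40m$, giving
\[
d(\tilde f_{\sigma_m \star \p}, s^0(\p)) = \frac{1}{\theta} \int_{\tau}^{1-\tau} \left\| f_{\sigma_m \star \p}(u) - s^0(\p)(h^{-1}(u)) \right\| du,
\]
where $h(t)=\theta t + \tau$. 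A triangle inequality then splits the integrand as
\[
\|f_{\sigma_m \star \p}(u) - s^0(\p)(u)\| + \|s^0(\p)(u) - s^0(\p)(h^{-1}(u))\|,
\]
and it suffices to bound each contribution by $\kappa_{\bal}/n$.

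For the first, spline-vs-piecewise-constant contribution I would reproduce the computation of Lemma~\ref{lemma:5} verbatim: decompose $\|g_i(s) - \p_i\|$ into the constant part $\|((C^m \star \sigma_m - \mathds{1}) \star \p)_i\|$, handled via the antiderivative kernel from Lemma~\ref{lemma:symetric} as in~\eqref{eq:CmqmIDW1}, and the polynomial remainder $\sum_{k\ge 1}\|(C^{m-k}\star \Delta^{\star k}\star \sigma_m\star \p)_i\| s^k/k!$, handled as in~\eqref{eq:SumW1Cmmk}. The only substantive change is that each use of Young's inequality is replaced by Lemma~\ref{lemma:normShifted}: since the kernels $C^{m-k}\star \sigma_m$ and $\Delta^{-1}(C^m\star\sigma_m - \mathds{1})$ have support of size $O(m)$, and we only integrate over $u\in[\tau,1-\tau]$ with $\tau = 10m/n$, the buffer is large enough that only valid interior indices of $\p$ are ever accessed, so the discrete semi-norms are controlled by the corresponding $\bal_k$. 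This yields a bound of the order $\bal_1/n + \kappa_{\bal}/n^2$.

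For the second, time-shift contribution, compute $u - h^{-1}(u) = ((\theta-1)u+\tau)/\theta = 10m(1-2u)/(n\theta)$, which is $O(m/n)$ uniformly on $[\tau,1-\tau]$. Consequently the integer index difference $k(u) := \lfloor nu\rfloor - \lfloor n h^{-1}(u) \rfloor$ is bounded by some constant $K = O(m)$. A telescoping bound $\|\p_i - \p_{i+k}\| \le \sum_{|j|\le K}\|(\Delta\star \p)_{i+j}\|$, followed by Fubini (swapping the sums in $i$ and $j$) and the Jensen inequality $\|\Delta\star \p\|_{\ell^1} \le \|\Delta\star \p\|_{\ell^q}$, yields
\[
\int_{\tau}^{1-\tau} \|s^0(\p)(u) - s^0(\p)(h^{-1}(u))\| du \le (2K+1)\|\Delta\star \p\|_{\ell^1} \le \frac{(2K+1)\bal_1}{n}.
\]

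The main obstacle, as is typical when porting periodic estimates to the interval setting, will be the careful bookkeeping of supports. In particular I must verify that for every convolution kernel appearing in the decomposition of $\|g_i(s) - \p_i\|$ --- notably the $\Delta^{-1}$-antiderivative of $C^m\star \sigma_m - \mathds{1}$ constructed in Lemma~\ref{lemma:symetric} --- the support sits inside an interval $[-\alpha,\beta]$ with $\alpha + \beta \le 10m$, so that Lemma~\ref{lemma:normShifted} applies on the summation range of indices corresponding to $u\in[\tau,1-\tau]$ without touching the non-periodic boundary. Once this is in place, collecting the two contributions gives the desired bound $\kappa_{\bal}/n$.
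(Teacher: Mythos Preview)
Your proposal is correct and follows essentially the same route as the paper: you split via the triangle inequality into the spline-versus-$s^0$ term (the paper's $\alpha$) handled as in Lemma~\ref{lemma:5} with Lemma~\ref{lemma:normShifted} replacing Young, and the time-shift term in $s^0$ (the paper's $\beta$) handled by a telescoping bound on $\Delta\star\p$. The only cosmetic difference is that you perform the change of variable $u=\theta t+\tau$ \emph{before} splitting, whereas the paper splits first and then changes variable inside the $\alpha$ term; the two decompositions coincide after that substitution.
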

\begin{lemma}\label{lemma:8}
Let $m \ge 2$ and $p \in \mathcal P_{n}^{m,q}(\bal)$  and let $\tilde{f}_{\sigma_m \star \p}$ be defined as in Lemma~\ref{lem:defin:f:non-periodic}, then 
\[d(\tilde{f}_{\sigma_m \star \p},s^1(\p)) \leq \frac{\kappa_{\bal}}{n}\]
\end{lemma}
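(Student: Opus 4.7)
The plan is to bound $d(\tilde f_{\sigma_m \star \p}, s^1(\p))$ by a triangle inequality that separates the intrinsic approximation error between $f_{\sigma_m\star \p}$ and $s^1(\p)$ from the additional error introduced by the time reparametrization $t \mapsto \theta t + \tau$ used in Lemma~\ref{lem:defin:f:non-periodic}. Writing $f := f_{\sigma_m \star \p}$, I will use
\[
d(\tilde f_{\sigma_m \star \p}, s^1(\p)) \le \underbrace{\int_0^1 \|f(\theta t + \tau) - s^1(\p)(\theta t + \tau)\|\,dt}_{(I)} \;+\; \underbrace{\int_0^1 \|s^1(\p)(\theta t + \tau) - s^1(\p)(t)\|\,dt}_{(II)}.
\]

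For $(I)$, the change of variable $u = \theta t + \tau$ gives $(I) = \theta^{-1}\int_{\tau}^{1-\tau} \|f(u) - s^1(\p)(u)\|\,du$, an interior-interval version of the integral bounded in Lemma~\ref{lemma:6}. The buffer $\tau = 10m/n$ is chosen so that on $[\tau, 1-\tau]$ every convolution kernel produced in the $\beta,\gamma,\delta$ decomposition of Lemma~\ref{lemma:6} has support that sits safely inside $[0,n-1]$. I can therefore replay that decomposition, substituting each use of the periodic Young inequality~\eqref{eq:YoungIneq} by Lemma~\ref{lemma:normShifted}. The outcome is $(I) \le \kappa_\bal/n^2$.

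For $(II)$, I observe that $s^1(\p)$ is piecewise affine with constant derivative $n(\p_{i+1}-\p_i)$ on each cell, so a direct computation yields $\|(s^1(\p))'\|_{L^q} = n\|\Delta\star \p\|_{\ell^q} \le \bal_1$. Since $\theta t + \tau - t = (10m/n)(1-2t)$, the shift is uniformly bounded by $10m/n$, and Fubini gives
\[
(II) \le \int_0^1 \|(s^1(\p))'(r)\|\,\mu(r)\,dr \le \frac{10m}{n}\,\|(s^1(\p))'\|_{L^1} \le \frac{10m\,\bal_1}{n},
\]
where $\mu(r)$ is the Lebesgue measure of the set of $t$'s for which $r$ lies between $t$ and $\theta t+\tau$, uniformly bounded by $10m/n$.

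Combining the two estimates gives $d(\tilde f_{\sigma_m \star \p}, s^1(\p)) \le \kappa_\bal/n^2 + 10m\,\bal_1/n \le \kappa_\bal/n$. The main obstacle will be the bookkeeping inside $(I)$: I must check that every kernel manipulation from the proof of Lemma~\ref{lemma:6} survives restriction to $[\tau, 1-\tau]$, which amounts to verifying that the supports of $C^m\star\sigma_m - \mathds{1}$, of the first-order kernel arising from $C^{m-1}\star\sigma_m$ compared to $\Delta\star\p$, and of the auxiliary operator $Q$, all fit inside the buffer $\tau = 10m/n$. The degradation of the final rate from the $1/n^2$ of Lemma~\ref{lemma:6} to $1/n$ is not a defect of the method but an intrinsic artifact of $(II)$: the time dilation of amplitude $10m/n$ combined with the $L^q$-bound $\bal_1$ on $(s^1(\p))'$ produces a spatial error of order $\bal_1/n$ that cannot be improved without strengthening the hypothesis on $\p$.
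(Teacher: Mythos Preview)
Your argument is correct, but it takes a somewhat different route from the paper. The paper dispatches Lemma~\ref{lemma:8} in one line: since $d(s^0(\p),s^1(\p))\le \bal_1/n$ for any $\p$, the bound on $d(\tilde f_{\sigma_m\star\p},s^1(\p))$ follows immediately from Lemma~\ref{lemma:7} by the triangle inequality. All the work is done for $s^0$ in Lemma~\ref{lemma:7}, where the paper uses exactly your two-term split---an interior ``$\alpha$'' term (your $(I)$) plus a reparametrization ``$\beta$'' term (your $(II)$)---but with $s^0$ in place of $s^1$.

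What you do instead is run that same two-term split directly against $s^1$, invoking the $\beta,\gamma,\delta$ decomposition of Lemma~\ref{lemma:6} on the interior window $[\tau,1-\tau]$ and replacing the periodic Young inequality by Lemma~\ref{lemma:normShifted}. This is more work, but it buys you a sharper statement along the way: your $(I)$ is genuinely $O(n^{-2})$, so the degradation to $O(n^{-1})$ is isolated in the reparametrization term $(II)$, exactly as you note. The paper's shortcut through $s^0$ hides this because the passage $d(s^0,s^1)\le\bal_1/n$ already costs a full power of $n$.

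One small correction to your write-up: the equality $\|(s^1(\p))'\|_{L^q}=n\|\Delta\star\p\|_{\ell^q}\le\bal_1$ is not literally true in the non-periodic case, because the last segment of $s^1(\p)$ on $[(n-1)/n,1)$ carries the wrap-around slope $n(\p_0-\p_{n-1})$, which is not part of the non-periodic $\ell^q$ norm of $\Delta\star\p$. However, telescoping gives $\|\p_0-\p_{n-1}\|\le\sum_{i=1}^{n-1}\|(\Delta\star\p)_i\|\le\bal_1$, so you still get $\|(s^1(\p))'\|_{L^1}\le 2\bal_1$ and the conclusion $(II)\le\kappa_{\bal}/n$ stands.
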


\begin{proof}[Lemmas~\ref{lemma:7}~and~\ref{lemma:8}]
 
Since for any $\p$, $ d(s^{0}_\p,s^{1}_\p)\le \frac{\alpha_1}{n}$, it suffices to prove Lemma~\ref{lemma:7}. We have
\begin{align*}
  d(\tilde{f}_{\sigma_m \star \p},s^0_\p) &= \int_{0}^{1} \left\|\tilde{f}_{\sigma_m \star \p}(t) - s^0_\p(t)   \right\| dt \\
  &\leq  \underbrace{ \int_{0}^{1} \left\|\tilde{f}_{\sigma_m \star \p}(t) - s^0_\p\left( \theta t + \tau \right) \right\| dt}_{\alpha}+ \underbrace{\int_{0}^{1}\left\| s^0_\p\left( \theta t + \tau \right) - s^0_\p(t)  \right\| dt}_{\beta} 
\end{align*}

The $\alpha$ term is a subpart of the equation~\eqref{eqn:W1forfq} and can be bounded in a similar fashion to~\eqref{eq:constantPart} :
\begin{align*}
 \alpha &= \int_0^1 \left \| f_{\sigma_m \star \p}(\theta t + \tau) - s_\p^0(\theta  t +  \tau)  \right\| = \theta^{-1} \sum_{i=10m}^{n-10m} \int_{\frac{i}{n}}^{\frac{i+1   }{n}} \|f_{\sigma_m \star \p}(t) - s^0_\p(t) \| dt \\
 &\leq \theta^{-1}   \left( \underbrace{\frac{1}{n} \sum_{i=10m}^{n-10m}\| (C^{m}\star \sigma_m \star \p)_i -\p_i \|}_{\zeta} + \underbrace{\frac{1}{n}\sum_{i=10m}^{n-10m}\sum_{k=1}^m \| (C^{m-k}\star\Delta^{\star k } \star \sigma_m \star \p)_i \frac{1}{(k+1)!} \| }_{\eta} \right)\\
\end{align*}

Given that the support of $C^{m-k}\star \sigma_m$ is included in $[-m,m]$ and using Lemma~\ref{lemma:normShifted} the $\eta$ term can be bounded in the same manner as in Equation~\eqref{eq:SumW1Cmmk} . For the $\zeta$ part define $A=\Delta^{-1}(C^{m}\star \sigma_m - \mathds{1})$; the support of $A$ is included in $[-m,m]$ by virtue of Lemma~\ref{lemma:normShifted} and bounding $\theta^{-1}$ by $1 + \frac{\kappa}{n}$, we have that :
\[
 \alpha \leq \frac{\kappa_{\bal}}{n}
\]

For the $\beta$ part notice that $\theta + 2\tau = 1$, so that :

\[
 | \theta t + \tau -t | \leq \tau
\]

\begin{align*}
 \beta &= \sum_{i=0}^{n-1} \int_{\frac{i}{n}}^{\frac{i+1}{n}} \left\|s^0_\p(\theta t + \tau) - \p_i \right\| dt \leq \sum_{i=0}^{n-1} \int_{\frac{i}{n}}^{\frac{i+1}{n}} \sum_{k=-n\tau+1}^{n\tau} \| \p_{i+k} - \p_{i + k - 1} \|\chi_{i+k \in \llbracket 1,n-1 \rrbracket} dt \\
 &\leq \frac{1}{n} \sum_{i=1}^{n-1} 2 n \tau \|(\Delta \star \p)_i \| = 2 n \tau \|\Delta \star \p \|_{\ell^1} \leq \kappa \frac{\bal_{1}}{n}
\end{align*}
Since $ \|\Delta \star \p \|_{\ell^1} \leq \frac{\bal_1}{n}$ and  $\tau \leq \frac{\kappa}{n}$.
This allows us to conclude that :
\[
d\left(\tilde{f}_{\sigma_m \star \p},s^0_{\p}\right) \leq \frac{\bal_1}{n} { \kappa}_{m} +\frac{{ \kappa}_{\bal}}{n^2},
\]
and
\[
d\left(\tilde{f}_{\sigma_m \star \p},s^1_{\p}\right) \leq \frac{\bal_1}{n} { \kappa}_{m} +\frac{{ \kappa}_{\bal}}{n^2}.
\]
\end{proof}

\subsection{Proof of theorems}\label{sec:proofOfTheorems}

The end of the proof proceeds as follows. For any $\p \in \mathcal{P}_n^{m,q}(\bal)$, build $\q=\sigma_m \star \p$, notice that $\Vert \sigma_m \Vert_{\ell^1}=n^{-1}$ so that $\q \in \mathcal{P}_{\sharp,n}^{m,q}(\bal)$. 
We have that $f_\q \in W_\sharp^{m,q}(\bal+\frac {\kappa_{\bal}}{n^2})$. Let $\delta \in \mathbb{R}$ be a scaling factor such that $\delta f_\q \in W_\sharp^{m,q}(\bal)$.
Notice that there exists yet another constant depending on ${\bal}$ only and still denoted ${\kappa_{\bal}}$ such that $\delta = 1 + \frac {\kappa_{\bal}}{n^2}$, we then have
\[d(\delta f_\q,f_\q)\le \frac {\kappa_{\bal}}{n^2}\]
By the triangle inequality for the distance $d$, we conclude that
\begin{equation}\label{eqn:correctRate}
d(s^0(\p),\delta f_\q)\le \frac{\kappa}{n}, \quad \text{and } d(s^1(\p),\delta f_\q)\le\frac{\kappa}{n^2} \text{ if } m\ge 2. 
\end{equation}
Thence $\delta f_\q \in W_\sharp^{m,q}(\bal)$ is sufficiently close to $s^0(\p)$ ( resp.  $s^1(\p)$). This ends the proof.

\begin{proof}[Theorems~\ref{theo:firstOrderperiodic}~to~\ref{theo::second-order}]
For any function $f$ in $W_\sharp^{m,q}(\bal)$ (resp. $f$ in $W^{m,q}(\bal)$)  take $\p \in \mathbb{R}^{d\times n}$ such that $\p_i = f\left(\frac{i}{n} \right)$, then $\p \in \mathcal{P}_{\sharp,n}^{m,q}(\bal)$ (resp. $\p \in \mathcal{P}_{n}^{m,q}(\bal)$) by virtue of Proposition~\ref{prop:prop1}. Still using the result of Proposition~\ref{prop:prop1}, the distance between $f$ and its approximant, whether it is a piecewise constant or a piecewise linear spline, is bounded with the correct rate.

Now for any piecewise constant or linear function $s^0(\p) \in \mathcal{S}^{m,q}_{\sharp,n}(\bal)$ or $s^1(\p) \in \mathcal{L}^{m,q}_{\sharp,n}(\bal)$ (resp. $s^0(\p) \in \mathcal{S}^{m,q}_{n}(\bal)$ or $s^1(\p) \in \mathcal{L}^{m,q}_{n}(\bal)$) build $\q = \sigma_m \star \p$ and the smoothing spline $f_\q$ defined as in Proposition~\ref{prop:prop2} (resp.$\tilde f_\q$ defined as in Lemma~\ref{lem:defin:f:non-periodic}). This spline belongs to ${W}_{\sharp}^{m,q}((1 + \frac{\kappa_{\bal}}{n^2})\bal)$ by using Proposition~\ref{prop:prop3} (resp. ${W}^{m,q}((1 + \frac{\kappa_{\bal}}{n^2})\bal)$ by using Lemma~\ref{lem:defin:f:non-periodic}). The distance $d$ between $f_{\q}$ (resp. $\tilde f_\q$) and the piecewise constant or linear spline is bounded and the result of Lemmas~\ref{lemma:5}~or~\ref{lemma:6} (resp. Lemmas~\ref{lemma:7}~or~\ref{lemma:8}) with the correct rates. Introduce the scaled function $\delta f_\q$ (resp. $\delta \tilde f_\q$) as described in~\eqref{eqn:correctRate} to obtain a function in ${W}_{\sharp}^{m,q}(\bal)$ (resp. ${W}^{m,q}(\bal)$) whose distance with respect to the spline is bounded with the correct rate.
\end{proof}

\section*{Conclusion}
In this article, we bound the Hausdorff distance between set of continuous curve with a prescribed Sobelev semi-norm on their derivative and their discrete piecewise constant and piecewise linear counterparts. Bounding the Hausdorff requires a twofold control that is :
\begin{itemize}
 \item given a continuous curve, discretize the curve with a piecewise constant or linear spline sufficiently close in the sense of the 1-Wasserstein distance and which belongs to the suitable spline set.
 \item given a piecewise constant or linear spline, construct a continuous function sufficiently close in the sense of the 1-Wasserstein distance and which belong to the correct Sobolev multiballs.
\end{itemize}
The discretization step is trivial and given by the uniform sampling of the continuous curve. On the over hand finding a $m$ times continuous function that approximates the 0-th or 1-st order spline is trickier. The construction of this continuous approximant involves using $B$-splines of order $m$ but it appears that its expression is elegant (see Proposition~\ref{prop:prop4}). The derivatives continuousness of this approximant yields recurrence relationships involving Eulerian numbers and that are, to the best of our knowledge new.

\begin{appendix}\label{sec:Appendix}


\section{Numerical implementation}

Authors released an open source implementation of the presented smoothing Eulerian B-splines\footnote{\url{https://github.com/lebrat/eulerianApproximation}}. The code implements for $d=2$ the previously presented method with a graphical user interface.  Note that this code is easily scalable to higher dimensions since its it time complexity depends only on the number of points $\p$.

\end{appendix}

\bibliographystyle{plain}
\bibliography{bibi}

\end{document}